\newtheorem{theorem}{Theorem}
\newtheorem{lemma}{Lemma}
\newtheorem{definition}{Definition}
\newtheorem{proposition}{Proposition}
\newcommand{\Mk}{\mathcal{M}_k}
\newcommand{\Mkk}{\mathcal{M}_{k-1}}
\newcommand{\Hk}{\mathcal{H}_k}
\newcommand{\Rm}{\mathbb{R}^m}
\newcommand{\C}{\mathbb{C}}
\newcommand{\Clm}{\mathcal{C}l_m}
\newcommand{\Ho}{\mathcal{H}_1}
\newcommand{\scs}{\mathcal{S}}
\newcommand{\Mo}{\mathcal{M}_1}
\newcommand{\puj}{\partial_{u_j}}
\newcommand{\udx}{\langle u,D_x\rangle}
\newcommand{\dudx}{\langle D_u,D_x\rangle}
\newcommand{\Dtwo}{\mathcal{D}_2}
\newcommand{\Dt}{\mathcal{D}_{3}}
\newcommand{\Df}{\mathcal{D}_{4}}
\begin{document}

\title{Third order fermionic and fourth order bosonic operators}

\author{Chao Ding\thanks{Electronic address:  {\tt dchao@uark.edu}.} $^1$ and Raymond Walter$^{1,2}$\thanks{Electronic address:  {\tt rwalter@email.uark.edu}; R.W. acknowledges this material is based upon work supported by the National Science Foundation Graduate Research Fellowship Program under Grant No. DGE-0957325 and the University of Arkansas Graduate School Distinguished Doctoral Fellowship in Mathematics and Physics.} \\
\emph{\small $^1$Department of Mathematics, University of Arkansas, Fayetteville, AR 72701, USA} \\ 
\emph{\small $^2$Department of Physics, University of Arkansas, Fayetteville, AR 72701, USA}\\
\\
\emph{\normalsize Dedicated to Professor John Ryan on the occasion of his 60th birthday}
}
\date{}

\maketitle

\begin{abstract}
This paper continues the work of our previous paper \cite{Ding1}, where we generalize $k$th-powers of the Euclidean Dirac operator $D_x$ to higher spin spaces in the case the target space is a degree one homogeneous polynomial space. In this paper, we reconsider the generalizations of $D_x^3$ and $D_x^4$ to higher spin spaces in the case the target space is a degree $k$ homogeneous polynomial space. Constructions of $3rd$ and $4th$ order conformally invariant operators in higher spin spaces are given; these are the $3rd$ order fermionic and $4th$ order bosonic operators. Fundamental solutions and intertwining operators of both operators are also presented here. These results can be easily generalized to cylinders and Hopf manifolds as in \cite{Ding}.
\end{abstract}
{\bf Keywords:}\quad 3rd order fermionic operators, 4th order bosonic operators, Conformal invariance, Fundamental solutions, Intertwining operators.

\section{Introduction}\hspace*{\fill} 
\par

The \emph{higher spin theory} in Clifford analysis began with the Rarita-Schwinger operator \cite{B}, which is named analogously to the Dirac operator and reproduces the wave equations for a massless particle of arbitrary half-integer spin in four dimensions with appropriate signature \cite{Ro}. The former operator takes its name from the 1941 work of Rarita and Schwinger \cite{Ra} that simply formulated the theory of particles of arbitrary half-integer spin $k+\frac{1}{2}$ and in particular considered its implications for particles of spin $\frac{3}{2}$. The higher spin theory considers generalizations of classical Clifford analysis techniques to higher spin spaces \cite{B1, Br1, B, D, E, Li}, focusing on operators acting on functions on $\Rm$ that take values in arbitrary irreducible representations of $Spin(m)$. Generally these are polynomial representations, such as $k$-homogeneous monogenic (harmonic) polynomials corresponding to particles of half-integer spin (integer spin). The highest weight vector of the spin representation as a whole may even be taken as a parameter \cite{HighestWeightExample}, but we consider a narrower scope.\\

\par
Slov\'{a}k \cite{J} provided a non-constructive classification of all conformally invariant differential operators on locally conformally flat manifolds in higher spin theory, but this shows only between which vector bundles these operators exist and what is their order; explicit expressions of these operators are still being found. Eelbode and Roels \cite{E} noted the Laplace operator $\Delta_x$ is no longer conformally invariant when acting on $C^{\infty}(\Rm,\Ho)$, where $\Ho$ is the degree one homogeneous harmonic polynomial space (correspondingly $\Mo$ for monogenic polynomials). They construct a second order conformally invariant operator on $C^{\infty}(\Rm,\Ho)$, the (generalized) Maxwell operator, reproducing the Maxwell equation for appropriate dimension and signature \cite{E}. De Bie and his co-authors \cite{B1} generalize this Maxwell operator from $C^{\infty}(\Rm,\Ho)$ to $C^{\infty}(\Rm,\Hk)$ to provide the higher spin Laplace operators, which are the second order conformally invariant operators generalizing the Laplace operator to arbitrary integer spins. Our earlier work \cite{Ding1} generalizes $D_x^k$ in higher spin spaces in the case the target space is a degree one homogeneous polynomial space, encompassing the spin-1 and spin-$\frac{3}{2}$ cases. 
In this paper, we consider 3rd-order fermionic and 4th-order bosonic operators corresponding to the appropriate degree-$k$ homogeneous polynomial space ($\Mk$ or $\Hk$). While \cite{Ding1} considers arbitrary order operators of lowest spin, this work considers arbitrary spin operators of $3rd$ and $4th$ order.\\
\par
The paper is organized as follows: We briefly introduce Clifford algebras, Clifford analysis, and representation theory of the Spin group in Section 2. In Section 3, we introduce the $3rd$-order higher spin operators $\Dt$ as the generalization of $D_x^3$ when acting on $C^{\infty}(\Rm, \Mk)$ and $4th$-order higher spin operators $\Df$ as the generalization of $D_x^4$ when acting on $C^{\infty}(\Rm, \Hk)$. Nomenclature for general higher order higher spin operators is given: bosonic and fermionic operators. The construction and conformal invariance of both operators are given with the help of the concept of \emph{generalized symmetry} as in \cite{B1,Ding1,E}. Then we provide the intertwining operators for $\Dt$ and $\Df$ with similar techniques as in \cite{Ding1}, which also reveal that these operators are conformally invariant. Section 4 presents the fundamental solutions and intertwining operators of  $\Dt$ and $\Df$ using similar techniques as in \cite{Ding1}. The expressions of the fundamental solutions also suggest that $\Dt$ and $\Df$ are generalizations of $D_x^3$ and $D_x^4$ in higher spin spaces and these can be generalized to conformally flat manifolds, for instance, cylinders and Hopf manifolds, as in \cite{Ding}.
\section*{Acknowledgement}
The authors are grateful to Bent \O rsted for communications pointing out that the intertwining operators of our conformally invariant differential operators can be recovered as Knapp-Stein intertwining operators in higher spin theory.
\section{Preliminaries}
\subsection{Clifford algebra}\hspace*{\fill}
A real Clifford algebra, $\Clm,$ can be generated from $\mathbb{R}^m$ by considering the
relationship $$\underline{x}^{2}=-\|\underline{x}\|^{2}$$ for each
$\underline{x}\in \mathbb{R}^m$.  We have $\mathbb{R}^m\subseteq \Clm$. If $\{e_1,\ldots, e_m\}$ is an orthonormal basis for $\mathbb{R}^m$, then $\underline{x}^{2}=-\|\underline{x}\|^{2}$ tells us that $$e_i e_j + e_j e_i= -2\delta_{ij},$$ where $\delta_{ij}$ is the Kronecker delta function. An arbitrary element of the basis of the Clifford algebra can be written as $e_A=e_{j_1}\cdots e_{j_r},$ where $A=\{j_1, \cdots, j_r\}\subset \{1, 2, \cdots, m\}$ and $1\leq j_1< j_2 < \cdots < j_r \leq m.$
Hence for any element $a\in \Clm$, we have $a=\sum_Aa_Ae_A,$ where $a_A\in \mathbb{R}$. Similarly, the complex Clifford algebra $\Clm (\C)$ is defined as the complexification of the real Clifford algebra
$$\Clm (\C)=\Clm\otimes\C.$$
We consider real Clifford algebra $\Clm$ throughout this subsection, but in the rest of the paper we consider the complex Clifford algebra $\Clm (\mathbb{C})$ unless otherwise specified. \\
\par
The Pin and Spin groups play an important role in Clifford analysis. The Pin group can be defined as $$Pin(m)=\{a\in \mathcal{C}l_m: a=y_1y_2\dots y_{p},\  y_1,\dots,y_{p}\in\mathbb{S}^{m-1},p\in\mathbb{N}\},$$ 
where $\mathbb{S} ^{m-1}$ is the unit sphere in $\Rm$. $Pin(m)$ is clearly a group under multiplication in $\mathcal{C}l_m$. \\
\par
Now suppose that $a\in \mathbb{S}^{m-1}\subseteq \mathbb{R}^m$, if we consider $axa$, we may decompose$x=x_{a\parallel}+x_{a\perp},$
where $x_{a\parallel}$ is the projection of $x$ onto $a$ and $x_{a\perp}$ is the rest, perpendicular to $a$. Hence $x_{a\parallel}$ is a scalar multiple of $a$ and we have $axa=ax_{a\parallel}a+ax_{a\perp}a=-x_{a\parallel}+x_{a\perp}.$
So the action $axa$ describes a reflection of $x$ in the direction of $a$. By the Cartan-Dieudonn$\acute{e}$ Theorem each $O\in O(m)$ is the composition of a finite number of reflections. If $a=y_1\cdots y_p\in Pin(m),$ we define $\tilde{a}:=y_p\cdots y_1$ and observe that $ax\tilde{a}=O_a(x)$ for some $O_a\in O(m)$. Choosing $y_1,\ \dots,\ y_p$ arbitrarily in $\mathbb{S}^{m-1}$, we see that the group homomorphism
\begin{eqnarray}
\theta:\ Pin(m)\longrightarrow O(m)\ :\ a\mapsto O_a,
\end{eqnarray}
with $a=y_1\cdots y_p$ and $O_ax=ax\tilde{a}$ is surjective. Further $-ax(-\tilde{a})=ax\tilde{a}$, so $1,\ -1\in Ker(\theta)$. In fact $Ker(\theta)=\{1,\ -1\}$. See \cite{P1}. The Spin group is defined as
$$Spin(m)=\{a\in \mathcal{C}l_m: a=y_1y_2\dots y_{2p},? y_1,\dots,y_{2p}\in\mathbb{S}^{m-1},p\in\mathbb{N}\}$$
 and it is a subgroup of $Pin(m)$. There is a group homomorphism
\begin{eqnarray*}
\theta:\ Spin(m)\longrightarrow SO(m)\ ,
\end{eqnarray*}
which is surjective with kernel $\{1,\ -1\}$. It is defined by $(1)$. Thus $Spin(m)$ is the double cover of $SO(m)$. See \cite{P1} for more details.\\
\par
For a domain $U$ in $\Rm$, a diffeomorphism $\phi: U\longrightarrow \mathbb{R}^m$ is said to be conformal if, for each $x\in U$ and each $\mathbf{u,v}\in TU_x$, the angle between $\mathbf{u}$ and $\mathbf{v}$ is preserved under the corresponding differential at $x$, $d\phi_x$. For $m\geq 3$, a theorem of Liouville tells us the only conformal transformations are M\"obius transformations. Ahlfors and Vahlen show that given a M\"{o}bius transformation on $\mathbb{R}^m \cup \{\infty\}$ it can be expressed as $y=(ax+b)(cx+d)^{-1}$ where $a,\ b,\ c,\ d\in \Clm$ and satisfy the following conditions \cite{Ah}:
\begin{eqnarray*}
&&1.\ a,\ b,\ c,\ d\ are\ all\ products\ of\ vectors\ in\ \mathbb{R}^m;\\
&&2.\ a\tilde{b},\ c\tilde{d},\ \tilde{b}c,\ \tilde{d}a\in\mathbb{R}^m;\\
&&3.\ a\tilde{d}-b\tilde{c}=\pm 1.
\end{eqnarray*}
Since $y=(ax+b)(cx+d)^{-1}=ac^{-1}+(b-ac^{-1}d)(cx+d)^{-1}$, a conformal transformation can be decomposed as compositions of translation, dilation, reflection and inversion. This gives an \emph{Iwasawa decomposition} for M\"obius transformations. See \cite{Li} for more details.
\par
The Dirac operator in $\mathbb{R}^m$ is defined to be $$D_x:=\sum_{i=1}^{m}e_i\partial_{x_i}.$$  Note $D_x^2=-\Delta_x$, where $\Delta_x$ is the Laplacian in $\mathbb{R}^m$.  A $\Clm$-valued function $f(x)$ defined on a domain $U$ in $\Rm$ is left monogenic if $D_xf(x)=0.$ Since multiplication of Clifford numbers are not commutative in general, there is a similar definition for right monogenic functions. Sometimes we will consider the Dirac operator $D_u$ in vector $u$ rather than $x$.\\
\par 
Let $\mathcal{M}_k$ denote the space of $\mathcal{C}l_m$-valued monogenic polynomials, homogeneous of degree $k$. Note that if $h_k\in\Hk$, the space of $\mathcal{C}l_m$-valued harmonic polynomials homogeneous of degree $k$, then $D_uh_k\in\mathcal{M}_{k-1}$, but $D_uup_{k-1}(u)=(-m-2k+2)p_{k-1}u,$ so
$$\mathcal{H}_k=\mathcal{M}_k\oplus u\mathcal{M}_{k-1},\ h_j=p_k+up_{k-1}.$$
This is an \emph{Almansi-Fischer decomposition} of $\Hk$. See \cite{D} for more details. In this Almansi-Fischer decomposition, we define $P_k$ as the projection map 
\begin{eqnarray*}
P_k: \mathcal{H}_k\longrightarrow \mathcal{M}_k.
\end{eqnarray*}
Suppose $U$ is a domain in $\mathbb{R}^m$. Consider a differentiable function $f: U\times \mathbb{R}^m\longrightarrow \mathcal{C}l_m,$
such that for each $x\in U$, $f(x,u)$ is a left monogenic polynomial homogeneous of degree $k$ in $u$, then the Rarita-Schwinger operator \cite{B,D} is defined by 
 $$R_kf(x,u):=P_kD_xf(x,u)=(\frac{uD_u}{m+2k-2}+1)D_xf(x,u).$$

\subsection{Irreducible representations of the Spin group}
The following three representation spaces of the Spin group are frequently used as the target spaces in Clifford analysis. The spinor representation is the most commonly used spin representation in classical Clifford analysis and the other two polynomial representations are often used in higher spin theory.
\subsubsection{Spinor representation of $Spin(m)$}
Consider the complex Clifford algebra $\mathcal{C}l_m(\mathbb{C})$ with even dimension $m=2n$. Then $\C^m$ or the space of vectors is embedded in $\Clm(\C)$ as
\begin{eqnarray*}
(x_1,x_2,\cdots,x_m)\mapsto \sum^{m}_{j=1}x_je_j:\ \C ^m\hookrightarrow \mathcal{C}l_m(\C).
\end{eqnarray*}
Define the \emph{Witt basis} elements of $\C^{2n}$ as 
$$f_j:=\displaystyle\frac{e_j-ie_{j+n}}{2},\ \ f_j^{\dagger}:=-\displaystyle\frac{e_j+ie_{j+n}}{2}.$$
Let $I:=f_1f_1^{\dagger}\dots f_nf_n^{\dagger}$. The space of \emph{Dirac spinors} is defined as
$$\mathcal{S}:=\Clm(\C)I.$$ This is a representation of $Spin(m)$ under the following action
$$\rho(s)I:=sI,\ for\ s\in Spin(m).$$
Note that $\scs$ is a left ideal of $\Clm (\C)$. For more details, we refer the reader to \cite{De}. An alternative construction of spinor spaces is given in the classical paper of Atiyah, Bott and Shapiro \cite{At}.
\subsubsection{Homogeneous harmonic polynomials on $\mathcal{H}_k(\Rm,\mathbb{C})$}
The space of harmonic polynomials is invariant under the action of $Spin(m)$ because the Laplacian $\Delta_m$ is an $SO(m)$-invariant operator, but this space it is not irreducible for $Spin(m)$, decomposing into the infinite sum of spaces of $k$-homogeneous harmonic polynomials, $0\leq k<\infty$, each of which is irreducible for $Spin(m)$. This brings us to a familiar representation of $Spin(m)$, that is $\mathcal{H}_k$. The following action has been shown to be an irreducible representation of $Spin(m)$ \cite{L}: 
\begin{eqnarray*}
\rho\ :\ Spin(m)\longrightarrow Aut(\Hk),\ s\longmapsto \big(f(x)\mapsto f(sy\tilde{s})\big).
\end{eqnarray*}
with $x=sy\tilde{s}$. This can also be realized as follows
\begin{eqnarray*}
Spin(m)\xlongrightarrow{\theta}SO(m)\xlongrightarrow{\rho} Aut(\Hk);\\
a\longmapsto O_a\longmapsto \big(f(x)\mapsto f(O_ax)\big),
\end{eqnarray*}
where $\theta$ is the double covering map and $\rho$ is the standard action of $SO(m)$ on a function $f(x)\in\Hk$ with $x\in\mathbb{R}^m$. The function $\phi(z)=(z_1+iz_m)^k$ is the highest weight vector for $\Hk (\Rm,\C)$ having highest weight $(k,0,\cdots,0)$ (for more details, see \cite{G}). Accordingly, spin representations given by $\mathcal{H}_k(\Rm,\mathbb{C})$ are said to have integer spin $k$; we can either specify an integer spin $k$ or degree of homogeneity $k$ of harmonic polynomials.

\subsubsection{Homogeneous monogenic polynomials on $\mathcal{C}l_m$}\hspace*{\fill}
In $\mathcal{C}l_m$-valued function theory, the previously mentioned Almansi-Fischer decomposition shows that we can also decompose the space of $j$-homogeneous harmonic polynomials as follows
$$\Hk=\Mk\oplus u\Mkk.$$
If we restrict $\Mk$ to the spinor valued subspace, we have another important representation of $Spin(m)$: the space of $j$-homogeneous spinor-valued monogenic polynomials on $\Rm$, henceforth denoted by $\Mk:=\Mk(\Rm,\mathcal{S})$. More specifically, the following action has been shown to be an irreducible representation of $Spin(m)$:
\begin{eqnarray*}
\pi\ :\ Spin(m)\longrightarrow Aut(\Mk),\ s\longmapsto (f(x)\mapsto sf(sx\tilde{s})).
\end{eqnarray*}
When $m$ is odd, in terms of complex variables $z_s=x_{2s-1}+ix_{2s}$ for all $1\leq s\leq \frac{m-1}{2}$, the highest weight vector is
$\omega_k(x)=(\bar{z_1})^kI$ for $\Mk(\Rm,\mathcal{S})$ having highest weight $(k+\frac{1}{2},\frac{1}{2},\cdots,\frac{1}{2})$, where $\bar{z_1}$ is the conjugate of $z_1$, $\mathcal{S}$ is the Dirac spinor space, and $I$ is defined as in Section $2.2.1$; for details, see \cite{L}. Accordingly, the spin representations given by $\mathcal{M}_k(\Rm,\mathcal{S})$ are said to have half-integer spin $k+\frac{1}{2}$; we can either specify a half-integer spin $k+\frac{1}{2}$ or the degree of homogeneity $k$ of monogenic spinor-valued polynomials. 
\par


\section{Construction and conformal invariance}
Slov\'{a}k \cite{J} established the existence of conformally invariant differential operators of arbitrary order and spin, provided that operators of odd order (respectively even order) have half integer spin $k+\frac{1}{2}$ (integer spin $k$) and are between spaces of $k$-homogeneous monogenic polynomials $\mathcal{M}_k$ (harmonic polynomials $\mathcal{H}_k$), more details can be found in \cite{Ding1}. The spin-$\frac{1}{2}$ and spin-0 cases are well established to arbitrary order: these are the powers of the Dirac and Laplace operators. We recently established the cases of spin-$\frac{3}{2}$ and spin-1 to arbitrary order. In the first order case for arbitrary (half integer) spin, the explicit form of the operator is well known: the Rarita-Schwinger operators. Preceding our work, Eelbode and Roels followed by De Bie et al. worked out the second order case for arbitrary (integer) spin in the generalized Maxwell operator and higher spin Laplace operators. We push further here, working out the third and fourth order cases for arbitrary spin: in our terminology, these are the 3rd order fermionic operators and 4th order bosonic operators. Our nomenclature emphasizes the motivation by mathematical physics: particles of half-integer spin are known as fermions and particles of integer spin are known as bosons, so the operators of half-integer spin take the name fermionic operators and those of integer spin take the name bosonic operators. 

\subsection{3rd order higher spin operator $\mathcal{D}_{3}$}
Our main result in the $3rd$ order higher spin case is the following theorem.
\begin{theorem}
Up to a multiplicative constant, the unique $3$rd-order conformally invariant differential operator is $ \mathcal{D}_{3,k}:C^{\infty}(\mathbb{R}^m,\mathcal{M}_k)\longrightarrow C^{\infty}(\mathbb{R}^m,\mathcal{M}_k)$, where
\begin{eqnarray*}
\mathcal{D}_{3}&=&D_x^3+\displaystyle\frac{4}{m+2k}\langle u, D_x\rangle\langle D_u, D_x\rangle D_x-\displaystyle\frac{4||u||^2\langle D_u, D_x\rangle^2D_x}{(m+2k)(m+2k-2)}-\displaystyle\frac{2u\langle D_u, D_x\rangle D_x^2}{m+2k}\\
&&-\displaystyle\frac{8u\langle u, D_x\rangle\langle D_u,D_x\rangle^2}{(m+2k)(m+2k-2)}-\displaystyle\frac{8u^3\langle D_u,D_x\rangle^3}{(m+2k)(m+2k-2)(m+6k-10)}.
\end{eqnarray*}
\end{theorem}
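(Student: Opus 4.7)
The plan is to follow the \emph{generalized symmetry} method of \cite{E,B1,Ding1}, which reduces the problem to a finite linear system in the coefficients of an ansatz. First, I would write the most general ansatz for a $Spin(m)$-invariant, third order differential operator on $C^\infty(\Rm,\Mk)$: by $Spin(m)$-covariance, the difference $\Dt-D_x^3$ must be a polynomial of total order three in $D_x$ built from $D_x$, $u$, $D_u$, $\udx$, $\dudx$, $\|u\|^2$, and the Euler operator $\mathbb{E}_u$. Since the operator must preserve both the degree of homogeneity $k$ in $u$ and monogenicity in $u$, each monomial must carry equal numbers of $u$- and $D_u$-factors. Normal-ordering and then restricting to $\Mk$ (where $\mathbb{E}_u$ acts as $k$ and any right-acting $D_u$ annihilates the input) leaves precisely the six independent terms that appear in the stated formula, with five free ratios to determine.

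Second, I would impose the target-space constraint $D_u\Dt f=0$ whenever $f$ is monogenic in $u$. Using the Weyl-type identity $D_u u+u D_u=-m-2\mathbb{E}_u$ together with $[D_u,\udx]=D_x$ and $[D_u,\dudx]=0$, one commutes $D_u$ to the right through each term of the ansatz; the resulting rightmost $D_u$ then annihilates $f$ and the eigenvalue of $\mathbb{E}_u$ collapses to $k$, leaving a linear system among the coefficients that eliminates several parameters.

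Third, I would impose conformal invariance. Since conformal transformations are generated by translations, rotations, dilations, and the inversion, and invariance under the first three is automatic from $Spin(m)$-covariance and the homogeneity structure of the ansatz, it suffices to check invariance under the infinitesimal special conformal transformations. The generalized symmetry framework formalizes this by requiring that each lift $K_j^{(k)}$ of a special conformal generator to $C^\infty(\Rm,\Mk)$ satisfy a relation of the form $[K_j^{(k)},\Dt]=A_j\,\Dt$ for some first order operator $A_j$. Expanding this relation in the ansatz produces the remaining linear equations that fix all the coefficients, and uniqueness up to a scalar multiple then follows from Slov\'ak's non-constructive classification \cite{J}.

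The main obstacle is the commutator bookkeeping: because $u$ and $D_u$ are Clifford-valued and neither commute nor cleanly anticommute with $D_x$, $\udx$, and $\dudx$, each commutator $[K_j^{(k)},\Dt]$ expands into a long expression in which many monomials must collapse through the identities of the second step. A useful shortcut is to test both sides against the highest weight vector $\omega_k$ of $\Mk$, on which $\mathbb{E}_u$ and $\langle u,D_u\rangle$ act as scalars; each symmetry constraint then becomes a single rational identity in $m$ and $k$, and it is from balancing such identities that the asymmetric denominator $(m+6k-10)$ in the last coefficient naturally arises.
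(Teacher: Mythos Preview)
Your proposal is correct and follows essentially the same generalized-symmetry route as the paper: the paper states $\mathcal{D}_3$ and then verifies $[\mathcal{D}_3,\mathcal{C}_3]=6x_j\mathcal{D}_3$ by computing the commutator of $\mathcal{C}_3$ with each of the six terms separately (the six technical lemmas) and summing, whereas you propose to start from the same six-term ansatz and solve the identical commutator relation, together with the $D_u$-annihilation constraint, for the unknown coefficients---this is just the constructive side of the same calculation. One small slip: in the relation $[K_j^{(k)},\mathcal{D}_3]=A_j\,\mathcal{D}_3$ the factor $A_j$ is not a first-order operator but pure multiplication by $-6x_j$ (up to sign convention), and your highest-weight-vector shortcut, while legitimate, is not needed in the paper since the commutator lemmas already hold as operator identities on all of $C^\infty(\mathbb{R}^m,\mathcal{M}_k)$.
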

Hereafter we may suppress the $k$ index for the operator since there is little risk of confusion. Note the target space $\Mk$ is a function space, so any element in $C^{\infty}(\Rm,\Mk)$ has the form $f(x,u)\in \Mk$ for each fixed $x\in\Rm$ and $x$ is the variable on which $\Dt$ acts.\\
\par
Our proof of conformal invariance of this operator follows closely the method of \cite{E,Ding1}. In order to explain what conformal invariance means, we begin with the concept of a generalized symmetry (see for instance \cite{Eastwood}):
\begin{definition}
An operator $\eta_1$ is a generalized symmetry for a differential operator $\mathcal{D}$ if and only if there exists another operator $\eta_2$ such that $\mathcal{D}\eta_1=\eta_2\mathcal{D}$. Note that for $\eta_1=\eta_2$, this reduces to a definition of a (proper) symmetry: $\mathcal{D}\eta_1=\eta_1\mathcal{D}$.
\end{definition}
One determines the first order generalized symmetries of an operator, which span a Lie algebra \cite{E,Miller}. In this case, the first order symmetries will span a Lie algebra isomorphic to the conformal Lie algebra $\mathfrak{so}(1,m+1)$; in this sense, the operators we consider are conformally invariant. The operator $\Dt$ is $\mathfrak{so}(m)$-invariant (rotation-invariant) because it is the composition of $\mathfrak{so}(m)$-invariant (rotation-invariant) operators, which means the angular momentum operators $L_{ij}^x+L_{i,j}^u$ that generate these rotations are proper symmetries of $\Dt$.
 The infinitesimal translations $\partial_{x_j}, j=1,\cdots , n,$ corresponding to linear momentum operators are proper  symmetries of $\Dt$; this is an alternative way to say that $\Dt$ is invariant under translations that are generated by these infinitesimal translations. Readers familiar with quantum mechanics will recognize the connection to isotropy and homogeneity of space, the rotational and translational invariance of Hamiltonian, and the conservation of angular and linear momentum \cite{Sakurai}; see also \cite{Br} concerning Rarita-Schwinger operators. 
 
 The remaining two of the first order generalized symmetries of $\Dt$ are the Euler operator and special conformal transformations. The Euler operator $\mathbb{E}_x$ that measures degree of homogeneity in $x$ is a generalized symmetry because $\Dt\mathbb{E}_x=(\mathbb{E}_x+3)\mathcal{D}_{3}$; this is an alternative way to say that $\Dt$ is invariant under dilations, which are generated by the Euler operator. The special conformal transformations are defined in Lemma \ref{SCT} in terms of harmonic inversion for $\Ho$-valued functions; harmonic inversion is defined in Definition \ref{HI} and is an involution mapping solutions of $\Dt$ to $\Dt$. Readers familiar with conformal field theory will recognize that invariance under dilation corresponds to scale-invariance and that special conformal transformations are another class of conformal transformations arising on spacetime \cite{CFT}. An alternative method of proving conformal invariance of $\Dt$ is to prove the invariance of $\Dt$ under those finite transformations generated by these first order generalized symmetries (rotations, dilations, translations, and special conformal transformations) to show invariance of $\Dt$ under actions of the conformal group; this may be phrased in terms of M\"obius transformations and the Iwasawa decomposition. However, the first-order generalized symmetry method emphasizes the connection to mathematical physics and is more amenable to our proof of a certain property of harmonic inversion. It is also that used by earlier authors \cite{B1,E}. 

\begin{definition} \label{HI}
The monogenic inversion is a conformal transformation defined as
\begin{eqnarray*}
\mathcal{J}_{3}:C^{\infty}(\mathbb{R}^m,\mathcal{M}_k)\longrightarrow C^{\infty}(\mathbb{R}^m,\mathcal{M}_k):f(x,u)\mapsto \mathcal{J}_{3}[f](x,u):=\frac{x}{||x||^{m-2}}f(\frac{x}{||x||^2},\frac{xux}{||x||^2}).
\end{eqnarray*}
\end{definition}
Note that this inversion consists of Kelvin inversion $\mathcal{J}$ on $\mathbb{R}^m$ in the variable $x$ composed with a reflection $u\mapsto \omega u\omega$ acting on the dummy variable $u$ (where $x=||x||\omega$) and a multiplication by a conformal weight term $\displaystyle\frac{x}{||x||^{m-2}}$; it satisfies $\mathcal{J}_{3}^2=-1.$\\
\par
Then we have the special conformal transformation defined in the following lemma. The definition is an infinitesimal version of the fact that finite special conformal transformations consist of a translation preceded and followed by an inversion \cite{CFT}: an infinitesimal translation preceded and followed by monogenic inversion. The second equality in the lemma shares some terms in common with the generators of special conformal transformations in conformal field theory \cite{CFT}, and is a particular case of a result in \cite{ER}.

\begin{lemma} \label{SCT}
The special conformal transformation defined as $\mathcal{C}_{3}:=\mathcal{J}_{3}\partial_{x_j}\mathcal{J}_{3}$ satisfies
\begin{eqnarray*}
\mathcal{J}_{3}\partial_{x_j}\mathcal{J}_{3}=xe_j-2\langle u,x\rangle\partial_{u_j}+2u_j\langle x,D_u\rangle -||x||^2\partial_{x_j}+x_j(2\mathbb{E}_x+m-2).
\end{eqnarray*}
\end{lemma}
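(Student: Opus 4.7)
My plan is to prove this identity by direct computation of $\mathcal{J}_{3}\partial_{x_j}\mathcal{J}_{3}[f](x,u)$, using the product and chain rules, then simplifying via the property $\mathcal{J}_{3}^2=-\mathrm{id}$ stated above. First I would fix notation by writing $\phi(x):=x/\|x\|^2$ (which satisfies $\phi\circ\phi=\mathrm{id}$) and $\psi(x,u):=xux/\|x\|^2=u-2\langle u,x\rangle x/\|x\|^2$ (from the Clifford identity $xux=\|x\|^2 u-2\langle u,x\rangle x$, which follows from $ux+xu=-2\langle u,x\rangle$ and $x^2=-\|x\|^2$). I would then extract the three ``pullback'' identities that drive the whole double inversion: $\psi(\phi(x),\psi(x,u))=u$, $\langle\psi(x,u),\phi(x)\rangle=-\langle u,x\rangle/\|x\|^2$, and the Clifford product $(x/\|x\|^{m-2})\cdot(\phi(x)/\|\phi(x)\|^{m-2})=x^2/\|x\|^2=-1$; this last factor of $-1$ is responsible for the sign in $\mathcal{J}_{3}^2=-\mathrm{id}$ and will propagate through the computation.

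Next I would differentiate $\mathcal{J}_{3}[f](x,u)=\frac{x}{\|x\|^{m-2}}f(\phi(x),\psi(x,u))$ with respect to $x_j$, splitting the result into three pieces according to whether $\partial_{x_j}$ strikes the conformal weight $x/\|x\|^{m-2}$ (piece (A)), acts through the chain rule on the first slot $\phi$ (piece (B)), or on the second slot $\psi$ (piece (C)). The only component derivatives needed are $\partial_{x_j}\phi_i=\delta_{ij}/\|x\|^2-2x_ix_j/\|x\|^4$ and $\partial_{x_j}\psi_i=-2u_jx_i/\|x\|^2-2\langle u,x\rangle\delta_{ij}/\|x\|^2+4\langle u,x\rangle x_ix_j/\|x\|^4$. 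Then I would apply $\mathcal{J}_{3}$ to each of (A), (B), (C) separately; its effect is to multiply every coefficient on the left by $x/\|x\|^{m-2}$ and substitute $x\mapsto\phi(x)$, $u\mapsto\psi(x,u)$, and by the pullback identities above to convert the pulled-back partial derivatives of $f$ into ordinary $\partial_{x_i}f$ and $\partial_{u_i}f$ evaluated at $(x,u)$.

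Finally I would collect the three contributions. Piece (A) should produce $(xe_j+(m-2)x_j)f$ after using $x^2=-\|x\|^2$; piece (B) should collapse to $-\|x\|^2\partial_{x_j}f+2x_j\mathbb{E}_x f$, where the minus sign in front of $\|x\|^2\partial_{x_j}$ is exactly the Clifford $-1$ above (this is the essential distinction from the classical scalar Kelvin-conjugation identity, where the same factor is $+1$); and piece (C), the most delicate step, should simplify from the four apparent terms in $\partial_{x_j}\psi_i$ evaluated at $(\phi(x),\psi(x,u))$ down to just $-2u_jx_i+2\langle u,x\rangle\delta_{ij}$, using $\langle\psi,\phi\rangle=-\langle u,x\rangle/\|x\|^2$, and after summation over $i$ yield the clean contribution $2u_j\langle x,D_u\rangle f-2\langle u,x\rangle\partial_{u_j}f$. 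Summing (A), (B) and (C) then reproduces exactly the claimed expression. The main obstacle I anticipate is the bookkeeping in piece (C): the non-commutative reflection $\psi(x,u)=xux/\|x\|^2$ generates several $u$- and $x$-dependent terms whose cancellation after the second substitution must be verified carefully so that only the two surviving $u$-dependent terms remain.
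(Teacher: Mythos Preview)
Your proposal is correct and follows precisely the direct-computation approach that the paper invokes (the paper simply cites ``a similar calculation as in Proposition A.1 in \cite{B1}''). Your split into pieces (A), (B), (C) and the pullback identities $\psi(\phi(x),\psi(x,u))=u$ and $\langle\psi(x,u),\phi(x)\rangle=-\langle u,x\rangle/\|x\|^2$ are exactly what that referenced calculation uses, and the bookkeeping in piece (C) indeed collapses as you anticipate.
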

\begin{proof}
A similar calculation as in \emph{Proposition A.1} in \cite{B1} will show the conclusion.
\end{proof}
Then, we have the main proposition as follows.
\begin{proposition}\label{propthree}
The special conformal transformations $\mathcal{C}_{3}$, with $j\in\{1,2,\dots,m\}$ are generalized symmetries of $\Dt$. More specifically,
\begin{eqnarray*}
[\Dt,\mathcal{C}_{3}]=6x_j\Dt.
\end{eqnarray*}
 In particular, this shows that 
\begin{eqnarray}\label{crucialeven}
\mathcal{J}_{3}\Dt\mathcal{J}_{3}=||x||^{6}\Dt,
\end{eqnarray}
 which is the generalization of $D_x^3$ in classical Clifford analysis \cite{P}. This also implies $\Dt$ is invariant under inversion.
 \end{proposition}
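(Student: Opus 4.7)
The plan is to prove the commutator identity $[\Dt,\mathcal{C}_{3}]=6x_j\Dt$ by direct computation using the explicit form of $\mathcal{C}_{3}$ furnished by Lemma \ref{SCT}, and then to deduce the inversion formula $\mathcal{J}_{3}\Dt\mathcal{J}_{3}=\|x\|^6\Dt$ by conjugating with $\mathcal{J}_{3}$ and exploiting $\mathcal{J}_{3}^2=-1$. This mirrors the pattern established for the Maxwell operator in \cite{E} and for the operators of \cite{Ding1}.

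First I would substitute $\mathcal{C}_{3}=xe_j-2\langle u,x\rangle\puj+2u_j\langle x,D_u\rangle-\|x\|^2\pxj+x_j(2\Ex+m-2)$ from Lemma \ref{SCT}, and write $\Dt=\sum_{i=1}^{6}T_i$ for the six summands in the theorem. By bilinearity of $[\cdot,\cdot]$ and the Leibniz identity $[AB,C]=A[B,C]+[A,C]B$, the commutator $[\Dt,\mathcal{C}_{3}]$ reduces to a finite pool of elementary commutators between the building blocks $D_x,\udx,\dudx,u,\|u\|^2$ appearing in each $T_i$ and the five summands of $\mathcal{C}_{3}$. Each such elementary commutator follows from $e_ie_j+e_je_i=-2\delta_{ij}$, from $[\pxj,x_i]=\delta_{ij}$ and $[\puj,u_i]=\delta_{ij}$, from commutativity of $x$-variables with $u$-variables, and from short manipulations of $\Ex$ and $\|x\|^2$.

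The main obstacle will be the algebraic bulk: six multi-factor products paired with five summands yield on the order of a hundred subterms whose cancellations must assemble exactly into $6x_j\Dt$. To control this I would group contributions by bidegree in $(u,D_u)$, by number of $D_x$ factors, and by Clifford grade; since $6x_j\Dt$ shares the structural signature of $\Dt$, each homogeneous component of the commutator must independently match the corresponding component of $6x_j\Dt$, furnishing continual internal consistency checks. Moreover, the denominators $(m+2k)$, $(m+2k-2)$, and $(m+6k-10)$ in the theorem are precisely those required so that the shift factors generated when commuting $u$ past $D_u$ cancel cleanly, so the success of these cancellations simultaneously verifies the coefficients asserted for $\Dt$.

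For the inversion identity I would conjugate $[\Dt,\mathcal{J}_{3}\pxj\mathcal{J}_{3}]=6x_j\Dt$ by $\mathcal{J}_{3}$ on the left and right. Using $\mathcal{J}_{3}^2=-1$, this collapses to $[\pxj,\mathcal{J}_{3}\Dt\mathcal{J}_{3}]=6\mathcal{J}_{3}x_j\mathcal{J}_{3}^{-1}\cdot\mathcal{J}_{3}\Dt\mathcal{J}_{3}$, and Definition \ref{HI} gives $\mathcal{J}_{3}x_j\mathcal{J}_{3}^{-1}=x_j/\|x\|^2$ directly, so the right-hand side equals $(6x_j/\|x\|^2)\mathcal{J}_{3}\Dt\mathcal{J}_{3}$. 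The same commutation relation with $\pxj$ is trivially satisfied by $\|x\|^6\Dt$ (since $\Dt$ has constant $x$-coefficients and $\pxj\|x\|^6=6x_j\|x\|^4$), so $\|x\|^{-6}\mathcal{J}_{3}\Dt\mathcal{J}_{3}$ commutes with every $\pxj$. Repeating the $\mathcal{J}_{3}$-conjugation argument for the other proper symmetries of $\Dt$ (the angular momenta $L_{ij}^x+L_{ij}^u$ and the Euler operator) would further force this quotient to be rotation-invariant and scale-homogeneous of the correct degree; matching the pure $D_x^3$ contribution against its classical Kelvin transform \cite{P} then fixes the scalar to $1$, and invariance of $\Dt$ under inversion follows because $\mathcal{J}_{3}$ sends null solutions of $\Dt$ to null solutions.
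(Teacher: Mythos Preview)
Your approach to the commutator identity $[\Dt,\mathcal{C}_{3}]=6x_j\Dt$ is exactly the paper's: split $\Dt$ into its six summands and compute each commutator with $\mathcal{C}_3$ separately (the paper packages these as six short lemmas, one per summand), then add. So for the main computation you and the paper coincide.

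Where your sketch departs from the paper is in deducing $\mathcal{J}_{3}\Dt\mathcal{J}_{3}=\|x\|^6\Dt$ from the commutator identity, and here there is a genuine gap. Your conjugation argument correctly yields that $C:=\|x\|^{-6}\mathcal{J}_{3}\Dt\mathcal{J}_{3}$ commutes with every $\pxj$, hence has constant $x$-coefficients, and the further conjugations you indicate do make $C$ rotation-invariant and homogeneous of the right degree. But the sentence ``matching the pure $D_x^3$ contribution \dots\ then fixes the scalar to $1$'' tacitly assumes $C=c\,\Dt$ for some scalar $c$, and that assumption is unwarranted: the space of translation-invariant, $Spin(m)$-invariant, third-order operators $C^\infty(\Rm,\Mk)\to C^\infty(\Rm,\Mk)$ is \emph{at least} two-dimensional --- for instance it contains both $R_k^3$ and $T_kT_k^*R_k$, the two building blocks appearing in Section~4 --- so rotation, translation, and dilation invariance alone do not pin $C$ down to a line. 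One remedy is to also conjugate the relation $[\Dt,\pxj]=0$ by $\mathcal{J}_{3}$, which yields $[C,\mathcal{C}_{3}]=6x_jC$; this shows $C$ satisfies the \emph{full} set of conformal symmetry relations, and then the uniqueness clause of Theorem~1 gives $C=c\,\Dt$. You would, however, need to be careful that uniqueness in Theorem~1 is established independently of the present proposition (e.g.\ from Slov\'ak's classification or from the coefficient-determination in the construction), lest the argument become circular. The paper itself avoids the issue by simply deferring this step to the companion paper \cite{Ding1}.
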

 If the main proposition holds, then the conformal invariance can be summarized in the following theorem:
 \begin{theorem}\label{theoremeven}
 The first order generalized symmetries of $\Dt$ are given by:
 \begin{enumerate}
 \item The infinitesimal rotation $L_{i,j}^x+L_{i,j}^u$, with $1\leq i<j\leq m$.
 \item The shifted Euler operator $(\mathbb{E}_x+\displaystyle\frac{m-2}{2})$.
 \item The infinitesimal translations $\partial_{x_j}$, with $1\leq j\leq m$.
 \item The special conformal transformations $\mathcal{J}_{3}\partial_{x_j}\mathcal{J}_{3}$, with $1\leq j\leq m$.
 \end{enumerate}
 These operators span a Lie algebra which is isomorphic to the conformal Lie algebra $\mathfrak{so}(1,m+1)$, whereby the Lie bracket is the ordinary commutator.
 \end{theorem}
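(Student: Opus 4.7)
The plan is to verify that each of the four families consists of first-order generalized symmetries of $\mathcal{D}_3$, compute the commutator relations among them, and recognize the resulting Lie algebra as $\mathfrak{so}(1,m+1)$ by matching structure constants and dimensions.

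First, I would dispatch the three ``easy'' families. The infinitesimal rotations $L_{i,j}^x+L_{i,j}^u$ are proper symmetries because $\mathcal{D}_3$ is assembled from $\mathfrak{so}(m)$-equivariant building blocks $D_x$, $\langle u,D_x\rangle$, $\langle D_u,D_x\rangle$, $u$, and $\|u\|^2$; the total angular momentum commutes with each, hence with their composition. The infinitesimal translations $\partial_{x_j}$ commute with $\mathcal{D}_3$ term by term since every $x$-derivative appears only through $D_x$ or $\langle D_u,D_x\rangle$, both with constant coefficients in $x$. For the shifted Euler operator, inspection of the explicit formula for $\mathcal{D}_3$ shows each summand is homogeneous of degree $-3$ in $x$-scaling, so $[\mathbb{E}_x,\mathcal{D}_3]=-3\mathcal{D}_3$, equivalently $\mathcal{D}_3(\mathbb{E}_x+\tfrac{m-2}{2})=(\mathbb{E}_x+\tfrac{m-2}{2}+3)\mathcal{D}_3$, a generalized (not proper) symmetry. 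The fourth family, the special conformal transformations $\mathcal{J}_3\partial_{x_j}\mathcal{J}_3$, is exactly the content of Proposition \ref{propthree}.

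Next, I would compute the Lie brackets. Writing $P_j=\partial_{x_j}$, $E=\mathbb{E}_x+\tfrac{m-2}{2}$, $M_{ij}=L_{i,j}^x+L_{i,j}^u$, and $K_j=\mathcal{J}_3\partial_{x_j}\mathcal{J}_3$, the expected conformal commutation relations are $[M_{ij},M_{kl}]$ closing into rotations with the usual Kronecker-delta structure constants, $[M_{ij},P_k]=\delta_{ik}P_j-\delta_{jk}P_i$ and similarly for $K$, $[E,P_j]=-P_j$, $[E,K_j]=K_j$, $[E,M_{ij}]=0$, $[P_i,P_j]=0=[K_i,K_j]$, and the decisive bracket $[P_i,K_j]=2\delta_{ij}E+2M_{ij}$. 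These precisely match the $\mathfrak{so}(1,m+1)$ commutation relations in an Iwasawa-type basis. Most brackets reduce to routine Leibniz computations; the decisive one $[P_i,K_j]$ is carried out by applying $\partial_{x_i}$ to the explicit second expression for $\mathcal{C}_3$ furnished by Lemma \ref{SCT} and collecting terms.

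The main obstacle will be the $[P_i,K_j]$ commutator and pinning down the shift $\tfrac{m-2}{2}$ in $E$: this shift is forced by the conformal weight carried by $\mathcal{J}_3$ and must exactly cancel the constants produced when commuting $\partial_{x_i}$ past the term $x_j(2\mathbb{E}_x+m-2)$ appearing in Lemma \ref{SCT}; the remaining pieces $xe_j$, $-2\langle u,x\rangle\partial_{u_j}$, $2u_j\langle x,D_u\rangle$, and $-\|x\|^2\partial_{x_j}$ contribute cleanly. Finally, a dimension count, $\dim\{M_{ij},P_j,E,K_j\}=\binom{m}{2}+m+1+m=\binom{m+2}{2}=\dim\mathfrak{so}(1,m+1)$, together with the standard fact that $\mathfrak{so}(1,m+1)$ is the maximal first-order symmetry algebra for a conformally covariant operator of this type, confirms completeness of the list and the claimed isomorphism.
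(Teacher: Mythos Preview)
Your approach is genuinely different from the paper's. The paper gives no argument at all: its entire proof reads ``The proof is similar as in \cite{ER} via transvector algebras.'' In other words, the paper defers the result to the machinery developed by Eelbode and Raeymaekers, whereas you propose a direct verification of the generalized-symmetry property and an explicit check of the $\mathfrak{so}(1,m+1)$ commutation relations using the closed form of $\mathcal{C}_3$ from Lemma~\ref{SCT}. Your route is more elementary and self-contained; the paper's route imports a structural argument that also delivers the \emph{completeness} of the list, which is the one place where your sketch is thin.

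Two remarks on details. First, when you compute $[P_i,K_j]$ from the expression in Lemma~\ref{SCT}, the term $xe_j$ produces a Clifford contribution $e_ie_j$. For $i\neq j$ this is the spin part of the rotation generator, and for $i=j$ it contributes $-1$, which shifts the constant in $E$. Since the target space is $\mathcal{M}_k$ (spinor-valued), the genuine infinitesimal rotation carries this spin piece; you should either absorb it into $M_{ij}$ or note explicitly that the paper's $L_{i,j}^x+L_{i,j}^u$ tacitly includes it. Otherwise your bracket $[P_i,K_j]=2\delta_{ij}E+2M_{ij}$ will not close with the stated shift $\tfrac{m-2}{2}$.

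Second, your final appeal to ``the standard fact that $\mathfrak{so}(1,m+1)$ is the maximal first-order symmetry algebra'' is exactly what the transvector-algebra argument in \cite{ER} supplies. Your dimension count shows only that the listed operators span a copy of $\mathfrak{so}(1,m+1)$, not that there are no further first-order generalized symmetries. If you want a self-contained proof of the theorem \emph{as stated} (``the first order generalized symmetries \dots\ are given by''), you must either reproduce that maximality argument or, as the paper does, cite it.
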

\begin{proof}
The proof is similar as in \cite{ER} via transvector algebras.
\end{proof}
\subsubsection*{Detailed proof of Proposition \ref{propthree}:}
To prove this proposition, we first  introduce the following technical lemmas:

\begin{lemma}\label{lemma 1}
For all $1 \leq j \leq m$, we have
\begin{eqnarray*}
&&[D_x^3,\mathcal{C}_{3}]=4\langle u, D_x\rangle D_x\partial_{u_j}-2u\partial_{u_j}D_x^2-4u_jD_x\langle D_u,D_x\rangle+6x_jD_x^3.
\end{eqnarray*}
\end{lemma}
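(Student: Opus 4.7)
The plan is to expand $\mathcal{C}_{3}$ using Lemma \ref{SCT}, split $[D_x^3, \mathcal{C}_{3}]$ into five pieces by bilinearity, and compute each. Since $\puj$, $u_j$, and $\pxj$ all commute with $D_x$, the problem reduces to evaluating $[D_x^3, xe_j]$, $[D_x^3, \langle u, x\rangle]$, $[D_x^3, \langle x, D_u\rangle]$, $[D_x^3, \|x\|^2]$, and $[D_x^3, x_j(2\Ex + m - 2)]$ and then pulling the commuting factors back through.

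Each reduced commutator is handled by the derivation expansion $[D_x^3, A] = [D_x, A]D_x^2 + D_x[D_x, A]D_x + D_x^2[D_x, A]$ together with the standard first-order toolbox: $[D_x, x_j] = e_j$, $[D_x, \|x\|^2] = 2x$, $[D_x, \langle u, x\rangle] = u$, $[D_x, \langle x, D_u\rangle] = D_u$, the Clifford anticommutators $\{D_x, x\} = -(m + 2\Ex)$, $\{D_x, u\} = -2\udx$, $\{D_x, D_u\} = -2\dudx$, $\{D_x, e_j\} = -2\pxj$, and $[\Ex, D_x] = -D_x$ (giving $[D_x^3, \Ex] = 3D_x^3$). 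The $6x_j D_x^3$ summand then arises immediately from $x_j[D_x^3, 2\Ex] = 6x_jD_x^3$; the $4\udx D_x\puj$ and $-2u\puj D_x^2$ summands emerge from $-2[D_x^3, \langle u, x\rangle]\puj = -2(uD_x^2 - 2\udx D_x)\puj$; and the $-4u_j D_x\dudx$ summand emerges from $2u_j[D_x^3, \langle x, D_u\rangle] = 2u_j(D_u D_x^2 - 2\dudx D_x)$, up to a leftover $2u_j D_u D_x^2$ term discussed below.

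The main expected obstacle is the Clifford-algebraic bookkeeping required to show that the contributions involving $e_j D_x^2$, $\Ex \pxj D_x$, and $x\pxj D_x^2$ arising from the $xe_j$, $-\|x\|^2\pxj$, and $x_j(2\Ex + m-2)$ pieces cancel exactly, leaving only the $6x_jD_x^3$ term. A further subtlety is the leftover $2u_j D_u D_x^2$ term from the $\langle x, D_u\rangle$ piece: this vanishes on the target space $C^\infty(\Rm, \Mk)$ because $D_x^2 = -\Delta_x$ is a scalar differential operator in $x$ and hence commutes with $D_u$, giving $D_u D_x^2 f = D_x^2 D_u f = 0$ for any $f$ monogenic in $u$. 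Once these cancellations and the monogenicity are accounted for, the four-term right-hand side of the lemma emerges exactly. The whole scheme parallels the analogous commutator computations for the higher spin Laplace operator in \cite{B1} and for the higher-order operators of lowest spin in \cite{Ding1}.
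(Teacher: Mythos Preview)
Your proposal is correct and follows essentially the same route as the paper, which simply states that one computes the commutator of the operator with each component of $\mathcal{C}_3$ and combines; your explicit first-order toolbox and the derivation expansion $[D_x^3,A]=[D_x,A]D_x^2+D_x[D_x,A]D_x+D_x^2[D_x,A]$ are exactly what is needed, and your observation that the leftover $2u_jD_uD_x^2$ term vanishes on $C^\infty(\Rm,\Mk)$ (because $D_x^2=-\Delta_x$ commutes with $D_u$) is the right way to read the identity, consistent with the appearance of $m+2k$ in the companion lemmas. One small caution: in handling the $xe_j$ piece, do not factor $[D_x^3,xe_j]$ as $[D_x^3,x]e_j$, since $e_j$ does not commute with $D_x$; use $[D_x,xe_j]=[D_x,x]e_j+x[D_x,e_j]$ instead, and the three remaining pieces $xe_j$, $-\|x\|^2\pxj$, $x_j(2\Ex+m-2)$ then cancel down cleanly to $6x_jD_x^3$ as you anticipated.
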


\begin{lemma}\label{lemma 2}
For all $1 \leq j \leq m$, we have
\begin{eqnarray*}
&&[\langle u,D_x\rangle\langle D_u,D_x\rangle D_x, \mathcal{C}_{3}] =-(m+2k)\langle u,D_x\rangle D_x\partial_{u_j}-e_ju\langle D_u,D_x\rangle D_x\\
&&+(m+2k-2)u_j\langle D_u,D_x\rangle D_x-2u\langle u,D_x\rangle\langle D_u,D_x\rangle\partial_{u_j}-2|u|^2\langle D_u,D_x\rangle D_x\partial_{u_j}\\
&&+6x_j\langle u,D_x\rangle\langle D_u,D_x\rangle D_x.
\end{eqnarray*}
\end{lemma}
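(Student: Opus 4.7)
My plan is to apply the Leibniz rule $[AB, C] = A[B,C] + [A,C]B$ twice to the triple product $\langle u, D_x\rangle\langle D_u, D_x\rangle D_x$, and to split $\mathcal{C}_3$ into its five summands via Lemma \ref{SCT}:
\[
\mathcal{C}_3 = xe_j - 2\langle u, x\rangle\partial_{u_j} + 2u_j\langle x, D_u\rangle - \|x\|^2\partial_{x_j} + x_j(2\mathbb{E}_x + m - 2).
\]
This produces fifteen elementary commutators (each of the three outer factors against each of the five summands), each of which reduces to basic identities, after which the results are sorted by Clifford type and operator degree.

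The required commutation relations are $[D_x, x_j] = e_j$, $[D_x, \|x\|^2] = 2x$, the anticommutator $\{D_x, x\} = -(m + 2\mathbb{E}_x)$, $[\langle u, D_x\rangle, x_j] = u_j$, $[\langle D_u, D_x\rangle, x_j] = \partial_{u_j}$, $[\langle D_u, D_x\rangle, u_j] = \partial_{x_j}$, $[\langle u, D_x\rangle, \partial_{u_j}] = -\partial_{x_j}$, together with the Euler-operator shifts $[\mathbb{E}_x, D_x] = -D_x$, $[\mathbb{E}_x, \langle u, D_x\rangle] = -\langle u, D_x\rangle$, and $[\mathbb{E}_x, \langle D_u, D_x\rangle] = -\langle D_u, D_x\rangle$. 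The coefficient $m + 2k$ in the stated expression will arise from combining the dimensional $m$ produced by $\{D_x, x\}$ with a $2k$ contributed by the Euler-in-$u$ action on the $k$-homogeneous target, transmitted through the $2u_j\langle x, D_u\rangle$ piece of $\mathcal{C}_3$. The trailing $6x_j\langle u, D_x\rangle\langle D_u, D_x\rangle D_x$ term is the expected conformal-weight-$6$ contribution for a third-order operator, coming primarily from the $-\|x\|^2\partial_{x_j}$ and $x_j(2\mathbb{E}_x + m - 2)$ summands.

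The main obstacle is the Clifford bookkeeping. Intermediate steps generate mixed products such as $e_j u$, $ue_j$, $xu$, and $ux$, which must be straightened by $\{e_j, u\} = -2u_j$ and $\{x, u\} = -2\langle x, u\rangle$, and these rearrangements can obscure when one has actually reached the claimed form. The isolated term $-e_j u \langle D_u, D_x\rangle D_x$ in the right-hand side is a genuine Clifford residue that does not merge with any scalar-like contribution; I expect it to emerge from the commutation of the inner $D_x$ with the $xe_j$ summand of $\mathcal{C}_3$. Because the operator under study is linear in $u$, no $u$-cubic corrections arise here, which simplifies the bookkeeping compared to the full verification of conformal invariance of $\Dt$.

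After all fifteen constituent commutators are evaluated, the concluding step is to collect like terms, group them by $u$-degree and by $\langle D_u, D_x\rangle$-degree, and verify agreement with the stated expression. The computation runs parallel to Proposition A.1 of \cite{B1} and the analogous third-order commutator lemmas in \cite{Ding1}, which supply a reliable template for organizing the calculation.
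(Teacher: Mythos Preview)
Your proposal is correct and follows essentially the same approach as the paper: the paper's proof of this lemma is the one-line remark that one computes the commutator of the operator with each component of $\mathcal{C}_3$ and then combines the results, which is precisely your plan of splitting $\mathcal{C}_3$ into its five summands and applying the Leibniz rule to the triple product. Your listing of the elementary commutation relations and your anticipation of where the Clifford residue $-e_j u\langle D_u,D_x\rangle D_x$ and the conformal-weight term $6x_j\langle u,D_x\rangle\langle D_u,D_x\rangle D_x$ originate are a more explicit organization of the same computation the paper leaves to the reader.
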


\begin{lemma}\label{lemma 3}
For all $1 \leq j \leq m$, we have
\begin{eqnarray*}
&&[|u|^2\dudx^2 D_x,\mathcal{C}_{3}]=2|u|^2\dudx^2e_j-(2m+4k-4)|u|^2\dudx D_x\puj\\
&&\ \ -2u|u|^2\dudx^2\puj+6x_j|u|^2\dudx^2D_x.
\end{eqnarray*}
\end{lemma}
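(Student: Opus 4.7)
The plan is to expand $[|u|^2\dudx^2 D_x,\mathcal{C}_{3}]$ by linearity over the five summands of
$$\mathcal{C}_{3} = xe_j - 2\langle u,x\rangle\puj + 2u_j\langle x,D_u\rangle - ||x||^2\pxj + x_j(2\Ex + m-2),$$
reducing each sub-commutator by iterated Leibniz expansion $[AB,C]=A[B,C]+[A,C]B$ until only atomic commutators remain. This is exactly the mechanism used for Lemmas 1 and 2 above, so the same computational machinery transfers to the present outer operator $|u|^2\dudx^2D_x$.

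Before expanding, I would catalogue the atomic commutators once. The scalar $|u|^2$ commutes with every purely $x$-dependent operator and with the Clifford element $e_j$, which immediately kills its direct contribution to the commutator with $xe_j$, $||x||^2\pxj$, and the purely $x$-dependent pieces of the last summand. Both $\dudx$ and $D_x$ commute (since $\partial_{u_k}$ commutes with every $\partial_{x_i}$ and every $e_i$), so factors of $D_x$ may be freely slid past $\dudx$. The remaining building blocks are
\begin{align*}
&[\dudx,\langle u,x\rangle]=\Ex+\mathbb{E}_u+m,\quad [\dudx,u_j]=\pxj,\quad [\dudx,x_j]=\puj,\\
&[\dudx,|u|^2]=2\udx,\quad [\puj,|u|^2]=2u_j,\quad [D_x,||x||^2]=2x,\quad [\Ex,D_x]=-D_x,
\end{align*}
together with the standard identity $D_xx+xD_x=-(2\Ex+m)$, from which $[D_x,xe_j]$ is obtained by right multiplication by $e_j$. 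Because $\dudx$ appears squared, each Leibniz expansion on a commutand produces two parallel contributions, one per factor of $\dudx$.

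The main obstacle will be the combinatorial bookkeeping amidst Clifford non-commutativity. When $[\dudx,u_j]$ or $[\dudx,\langle u,x\rangle]$ produces a $\pxj$ or an Euler factor, its position relative to the outer $D_x$ and to any $e_j$ coming from $xe_j$ must be tracked carefully; several intermediate terms will require the relations $u^2=-|u|^2$ and $uD_u+D_uu=-(2\mathbb{E}_u+m)$ (with $\mathbb{E}_u$ acting by the eigenvalue $k$ on $\Mk$-valued targets) to be put into the normal form of the right-hand side. I expect the $6x_j$-coefficient of $|u|^2\dudx^2D_x$ to follow from the third-order scaling of the operator under dilations (parallel to the $6x_jD_x^3$ term produced in Lemma 1), while the coefficient $-(2m+4k-4)=-2(m+2k-2)$ of $|u|^2\dudx D_x\puj$ should emerge from combining the dimensional constant $m$ with the eigenvalue $2k$ of $\mathbb{E}_u$ on $\Mk$ and the integer corrections picked up from $[\dudx,|u|^2]=2\udx$ and $[\puj,|u|^2]=2u_j$. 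All other intermediate terms should cancel in pairs, leaving precisely the four claimed summands.
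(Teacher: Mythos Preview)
Your proposal is correct and follows essentially the same approach as the paper, which states only that ``we calculate the commutators of our operator and each component of $\mathcal{C}_3$, then combining them gives the results.'' Your catalogue of atomic commutators and your explanation of where the coefficients $6$ and $-(2m+4k-4)$ originate are accurate, and the bookkeeping you anticipate (including the Clifford relations needed to produce the $-2u|u|^2\langle D_u,D_x\rangle^2\partial_{u_j}$ term from the $xe_j$ piece of $\mathcal{C}_3$) is exactly what the computation requires.
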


\begin{lemma}\label{lemma 4}
For all $1 \leq j \leq m$, we have
\begin{eqnarray*}
&&[u\dudx D_x^2,\mathcal{C}_{3}]=-2e_ju\dudx D_x-4u_j\dudx D_x-(m+2k)uD_x^2\puj\\
&&\ \ +4u\udx\dudx\puj-4u_ju\dudx^2+6x_ju\dudx D_x^2.
\end{eqnarray*}
\end{lemma}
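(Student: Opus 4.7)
The plan is to expand the commutator using the Leibniz-style identity
$$[ABC, X] = [A, X]BC + A[B, X]C + AB[C, X]$$
applied with $A = u$, $B = \dudx$, $C = D_x^2$, and $X = \mathcal{C}_{3}$. This reduces the calculation to the three commutators $[u, \mathcal{C}_{3}]$, $[\dudx, \mathcal{C}_{3}]$, and $[D_x^2, \mathcal{C}_{3}]$, which I would further decompose by splitting $\mathcal{C}_{3} = xe_j - 2\langle u,x\rangle\puj + 2u_j\langle x, D_u\rangle - \|x\|^2\pxj + x_j(2\Ex + m - 2)$ into its five summands $T_1, \dots, T_5$.

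For each $T_\ell$ I would tabulate the three elementary commutators $[u, T_\ell]$, $[\dudx, T_\ell]$, and $[D_x^2, T_\ell]$ using only the standard rules: the Clifford anticommutation $uv + vu = -2\langle u, v\rangle$, the Heisenberg relations $[\pxj, x_i] = [\puj, u_i] = \delta_{ij}$, the fact that the scalar operator $D_x^2 = -\Delta_x$ is central with respect to $e_j$ and $u$, and the PDE identities $[\Delta_x, x_k] = 2\partial_{x_k}$, $[\Delta_x, \|x\|^2] = 4\Ex + 2m$, and $[\Delta_x, \langle u, x\rangle] = 2\udx$. Many entries vanish: $[u, T_\ell]$ is nontrivial only for $T_1 = xe_j$ and (via $[u,\puj] = -e_j$) the $\puj$ hidden inside $T_3 = 2u_j\langle x, D_u\rangle$; $[\dudx, T_\ell]$ is nontrivial only where $T_\ell$ couples $x$- and $u$-variables; and $[D_x^2, T_\ell]$ is nontrivial only where $T_\ell$ depends on $x$. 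The Euler summand $T_5$ contributes the headline term $6x_j u \dudx D_x^2$ since $u\dudx D_x^2$ has $x$-homogeneity degree $-3$; this is the same Euler computation performed in Lemmas \ref{lemma 1}--\ref{lemma 3}.

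Reassembling the fifteen pieces via the Leibniz identity yields a preliminary expression that I would normalize by repeatedly using $u\puj = \puj u - e_j$, $u^2 = -\|u\|^2$, and the symmetry of $\dudx$ (so that $\langle x, D_u\rangle$ and $\udx$ freely commute through scalar factors). Terms are then grouped first by $D_x$-order (namely $D_x^2$, $D_x^1$, and $D_x^0$) and then by the number of explicit $u$- and $\puj$-factors appearing in front.

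The main obstacle is the Clifford-algebra bookkeeping involved in sliding $u$, $x$, and $e_j$ past one another, and then verifying that the accumulated low-order contributions collapse into exactly the five terms $-2e_j u\dudx D_x$, $-4u_j\dudx D_x$, $-(m+2k)u D_x^2 \puj$, $4u\udx\dudx \puj$, and $-4u_j u \dudx^2$ of the stated right-hand side. In particular, the $(m+2k)$ coefficient in front of $u D_x^2 \puj$ arises as a sum $m + (2k)$ coming from the $\|x\|^2\pxj$ contribution (which after $[\Delta_x, \|x\|^2]=4\Ex+2m$ supplies the $m$) and from accumulated $u_i\puj$ exchanges in the $T_2$ and $T_3$ pieces (which supply the $2k$ via the $u$-Euler identity on $\Mk$-valued functions). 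This final collection step is carried out in direct analogy with Proposition A.1 in \cite{B1} and with the preceding Lemmas \ref{lemma 1}--\ref{lemma 3}.
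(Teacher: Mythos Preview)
Your plan is essentially the paper's own: the authors state that the lemmas are proved by computing the commutator of the operator with each component of $\mathcal{C}_3$ and combining, and your Leibniz decomposition of the product $u\cdot\langle D_u,D_x\rangle\cdot D_x^2$ together with the five-way split of $\mathcal{C}_3$ is exactly that, just organized a bit more systematically. One bookkeeping slip to fix before you execute: you assert that $[u,T_\ell]$ is nontrivial only for $T_1$ and $T_3$, but $T_2=-2\langle u,x\rangle\partial_{u_j}$ also contains $\partial_{u_j}$, so $[u,T_2]=2\langle u,x\rangle e_j\neq 0$; likewise $[u,\langle x,D_u\rangle]=-x$ in $T_3$ involves all $\partial_{u_i}$, not just $\partial_{u_j}$. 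These omitted pieces are exactly what, after the Clifford rearrangements $ue_j+e_ju=-2u_j$ and $ux+xu=-2\langle u,x\rangle$, produce the $-4u_j\langle D_u,D_x\rangle D_x$ and part of the $-2e_ju\langle D_u,D_x\rangle D_x$ terms in the stated result, so without them the final collection will not close up.
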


\begin{lemma}\label{lemma 5}
For all $1 \leq j \leq m$, we have
\begin{eqnarray*}
&&[u\udx\dudx^2,\mathcal{C}_{3}]=-e_j|u|^2\dudx^2-(2m+4k-4)u\udx\dudx\puj\\
&&\ \ -2u|u|^2\dudx^2\puj+(m+2k-2)u_ju\dudx^2+6x_ju\udx\dudx^2.
\end{eqnarray*}
\end{lemma}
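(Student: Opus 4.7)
The plan is to compute the commutator by iterated Leibniz expansion against the explicit form of $\mathcal{C}_{3}$ provided by Lemma \ref{SCT}, then reduce to canonical commutation relations. First I would split
\begin{align*}
[u\udx\dudx^2,\mathcal{C}_{3}] &= [u,\mathcal{C}_{3}]\,\udx\dudx^2 + u[\udx,\mathcal{C}_{3}]\,\dudx^2 \\
&\quad + u\udx[\dudx,\mathcal{C}_{3}]\,\dudx + u\udx\dudx[\dudx,\mathcal{C}_{3}],
\end{align*}
reducing the task to the three elementary commutators $[u,\mathcal{C}_{3}]$, $[\udx,\mathcal{C}_{3}]$, and $[\dudx,\mathcal{C}_{3}]$.

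Next I would evaluate each of these by substituting
\begin{equation*}
\mathcal{C}_{3}=xe_j-2\langle u,x\rangle\puj+2u_j\langle x,D_u\rangle-\|x\|^2\pxj+x_j(2\mathbb{E}_x+m-2)
\end{equation*}
and applying the canonical relations $[\partial_{x_i},x_l]=\delta_{il}$ and $[\partial_{u_i},u_l]=\delta_{il}$, together with their immediate consequences $[\udx,x_l]=u_l$, $[\udx,\|x\|^2]=2\langle u,x\rangle$, $[\dudx,x_l]=\partial_{u_l}$, $[\dudx,u_l]=\partial_{x_l}$, the Clifford anticommutators $ux+xu=-2\langle u,x\rangle$ and $e_ju+ue_j=-2u_j$, and the Euler action $[\mathbb{E}_x,\partial_{x_l}]=-\partial_{x_l}$. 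To reassemble the resulting pieces I would also invoke the auxiliary relations $[u,\udx]=0$, $[u,\dudx]=-D_x$, $[\udx,\dudx]=-\Delta_x$, and $[\dudx,\langle x,D_u\rangle]=\Delta_u$ in order to move $u$, $\udx$, and $\dudx$ past one another while collecting like terms.

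The main obstacle will be the bookkeeping: expanding four Leibniz pieces against five summands of $\mathcal{C}_{3}$ produces a large number of intermediate expressions involving $x_i$, $\|x\|^2$, $\langle u,x\rangle$, $\langle x,D_u\rangle$, and Clifford products such as $xe_j$ or $e_ju$, which must consolidate to exactly the five-term right-hand side. Guided by the fact that $\mathcal{C}_{3}$ is expected to be a generalized symmetry of $\Dt$, I anticipate that every contribution carrying positional dependence on $x$ outside the leading $6x_j\,u\udx\dudx^2$ term will cancel --- the coefficient $6=2\cdot 3$ being the expected signature of a third-order conformally invariant operator --- while the homogeneity counting for $\Mk$-valued functions should account for the $(2m+4k-4)$ and $(m+2k-2)$ coefficients appearing in the remaining summands.
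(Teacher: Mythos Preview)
Your proposal is correct and is essentially the same direct computation the paper performs: the paper states only that one ``calculate[s] the commutators of our operator and each component of $\mathcal{C}_3$, then combining them gives the results.'' The sole organizational difference is that the paper expands the commutator by linearity over the five summands of $\mathcal{C}_3$, whereas you expand via the Leibniz rule over the four factors of $u\udx\dudx^2$; both routes reduce to the same canonical relations you list and require the $\Mk$-homogeneity and monogenicity in $u$ to produce the $k$-dependent coefficients.
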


\begin{lemma}\label{lemma 6}
For all $1 \leq j \leq m$, we have
\begin{eqnarray*}
&&[u^3\dudx^3,\mathcal{C}_{3}]=-(m+6k-10)u^3\dudx^2\puj+6x_ju^3\dudx^3.
\end{eqnarray*}
\end{lemma}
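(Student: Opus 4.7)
The plan is to compute the commutator directly by decomposing $\mathcal{C}_3$ according to Lemma~\ref{SCT}:
\begin{equation*}
\mathcal{C}_3 = xe_j - 2\langle u,x\rangle \partial_{u_j} + 2u_j\langle x, D_u\rangle -\|x\|^2\partial_{x_j} + x_j(2\mathbb{E}_x + m - 2),
\end{equation*}
and applying the graded Leibniz identity $[AB,C] = A[B,C]+[A,C]B$ to split $u^3$ from $\dudx^3$:
\begin{equation*}
[u^3\dudx^3,\mathcal{C}_3] = u^3\,[\dudx^3,\mathcal{C}_3] + [u^3,\mathcal{C}_3]\,\dudx^3.
\end{equation*}
The problem then reduces to a handful of elementary commutators of $\dudx^3$ and $u^3$ with each of the five summands of $\mathcal{C}_3$, which are then reassembled.

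First I would collect the building-block identities. Since $\dudx$ is scalar, $[x_j,\dudx]=-\partial_{u_j}$, $[\|x\|^2,\dudx]=-2\langle x,D_u\rangle$, $[xe_j,\dudx]=-D_u e_j$, and $[\mathbb{E}_x,\dudx]=-\dudx$; the cubic versions follow by Leibniz expansion, taking into account the non-vanishing commutator $[\dudx,\langle x,D_u\rangle]=\Delta_u$ that shows up on iteration. On the $u$-side, $u^2=-\|u\|^2$ is scalar, so from $[u,\partial_{u_j}]=-e_j$ and $[\|u\|^2,\partial_{u_j}]=-2u_j$ one gets $[u^3,\partial_{u_j}] = 2u_j u + \|u\|^2 e_j$ and $[u^3,\langle x,D_u\rangle] = 2\langle u,x\rangle u + \|u\|^2 x$; the purely Clifford commutator $[u^3,xe_j]$ reduces, via $xu+ux=-2\langle u,x\rangle$, to a combination of $\|u\|^2 u_j x$- and $\|u\|^2 \langle u,x\rangle e_j$-type terms whose contribution is engineered to cancel those of $-2\langle u,x\rangle\partial_{u_j}$ and $2u_j\langle x,D_u\rangle$.

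Next I would assemble the resulting contributions and group them by the form of the surviving operator. The coefficients $-2$ and $+2$ on $\langle u,x\rangle\partial_{u_j}$ and $u_j\langle x,D_u\rangle$ in $\mathcal{C}_3$ are chosen precisely to kill the off-diagonal terms of the shape $\langle u,x\rangle u\,\dudx^3$, $\|u\|^2 x\,\dudx^3$, together with the $\Delta_u$-bearing contributions coming from $-\|x\|^2\partial_{x_j}$; this leaves only expressions of the form $x_j u^3 \dudx^3$ and $u^3\dudx^2\partial_{u_j}$. The dilation term $x_j(2\mathbb{E}_x + m - 2)$ combined with $[\mathbb{E}_x,\dudx^3]=-3\dudx^3$ yields the advertised $6 x_j u^3 \dudx^3$. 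The remaining scalar coefficient in front of $u^3 \dudx^2\partial_{u_j}$ then collects contributions from all five summands of $\mathcal{C}_3$, and after using $D_u u = -u D_u - 2\mathbb{E}_u - m$ on the three intermediate $u$-degrees $k, k-1, k-2$ that $\dudx^3$ successively peels off, one reads off the coefficient $-(m + 6k - 10)$.

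The main obstacle will be the combinatorial bookkeeping of Clifford signs and factor orderings: several terms that look different at first glance must be shown to coincide after anticommuting $u$ past $e_j$ or $x$, or after a careful reshuffling of the $\partial_{u_j}$ factor to the right. The precise value $-(m+6k-10)$, as opposed to some nearby integer combination of $m$ and $k$, depends sensitively on the exact $\mathbb{E}_u$-eigenvalues picked up when $D_u$ is commuted past each of the three factors of $u$ in $u^3$; an off-by-one error at any step would alter the coefficient. This final consolidation is the step requiring the most care and the longest calculation.
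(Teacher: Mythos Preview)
Your proposal is correct and takes essentially the same approach as the paper: the paper states only that ``we calculate the commutators of our operator and each component of $\mathcal{C}_3$, then combining them gives the results,'' which is exactly your plan, with the additional (and harmless) organizational choice of applying the Leibniz rule $[AB,C]=A[B,C]+[A,C]B$ to separate $u^3$ from $\dudx^3$. One small caution: your heuristic for the coefficient $-(m+6k-10)$ via ``the three intermediate $u$-degrees $k,k-1,k-2$'' is a bit loose---in the actual computation the $k$-dependence also enters through $[\dudx,\langle u,x\rangle]=m+\mathbb{E}_u+\mathbb{E}_x$ and through commuting $D_u$ past $u^3$, so when you carry out the bookkeeping make sure you are tracking all of these sources rather than relying on that mnemonic.
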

To prove these lemmas, we calculate the commutators of our operator and each component of $\mathcal{C}_3$, then combining them gives the results. We use these lemmas to obtain 
\begin{eqnarray*}
[\mathcal{D}_{3},\mathcal{C}_{3}]=6x_j\mathcal{D}_{3}.
\end{eqnarray*}
Similar arguments as in \cite{Ding1} give that $\mathcal{J}_{3}\mathcal{D}_{3}\mathcal{J}_{3}=||x||^{6}\mathcal{D}_{3}$, which can be rewritten as
$$\mathcal{D}_{3,y,w}\frac{x}{||x||^{m-2}}f(y,w)=\frac{x}{||x||^{m+2}}\mathcal{D}_{3,x,u}f(x,u),\ \forall f(x,u)\in C^{\infty}(\Rm,\Mk),$$
where $y=x^{-1}$ and $w=\displaystyle\frac{xux}{||x||^2}$. Therefore, we have proved $\mathcal{D}_{3}$ is invariant under inversion.

\subsection{4th order higher spin operator $\Df$}
Now for the main result in the $4th$ order higher spin case.
\begin{theorem}
Up to a multiplicative constant, the unique $4$th-order conformally invariant differential operator is $\mathcal{D}_{4}:C^{\infty}(\mathbb{R}^m,\mathcal{H}_k)\longrightarrow C^{\infty}(\mathbb{R}^m,\mathcal{H}_k)$, where
\begin{eqnarray*}
\mathcal{D}_{4}=\Dtwo^2-\displaystyle\frac{8}{(m+2k-2)(m+2k-4)}\Dtwo\Delta_x.
\end{eqnarray*}
\end{theorem}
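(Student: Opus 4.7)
The plan is to adapt the strategy used for $\Dt$ in Proposition \ref{propthree}. First I would introduce a bosonic harmonic inversion tailored to the fourth order,
$$\mathcal{J}_{4}[f](x,u) := \frac{1}{\|x\|^{m-4}}\, f\!\left(\frac{x}{\|x\|^2},\,\frac{xux}{\|x\|^2}\right),$$
which is an involution on $C^{\infty}(\Rm,\Hk)$; the conformal weight $m-4$ is forced by the order of $\Df$, and no prefactor $x$ appears because $\Hk$ carries a bosonic (integer-spin) representation. Using $\mathcal{J}_4$ I would define the special conformal transformations $\mathcal{C}_{4,j} := \mathcal{J}_4\,\pxj\,\mathcal{J}_4$ for $1\leq j\leq m$, and derive their explicit first-order form by a direct calculation analogous to the proof of Lemma \ref{SCT}; the resulting expression is formally that of Lemma \ref{SCT} with $m-2$ replaced by $m-4$.

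The conformal invariance of $\Df$ then reduces to the commutator identity
$$[\Df,\mathcal{C}_{4,j}] = 8\,x_j\,\Df, \qquad 1\leq j\leq m,$$
whose prefactor $8 = 2\cdot 4$ is twice the order of $\Df$, mirroring the $6 = 2\cdot 3$ appearing in Proposition \ref{propthree}. To establish it I would write $\Df = \Dtwo^2 - c\,\Dtwo\Delta_x$ with $c = \frac{8}{(m+2k-2)(m+2k-4)}$ and commute each summand with $\mathcal{C}_{4,j}$. The already-known conformal invariance of $\Dtwo$ on $\Hk$-valued functions \cite{B1,E} supplies the companion identity $[\Dtwo,\mathcal{C}_{2,j}] = 4x_j\Dtwo$ at the weight-$(m-2)$ level; since $\mathcal{J}_4 = \|x\|^2\mathcal{J}_2$, the SCT $\mathcal{C}_{4,j}$ differs from $\mathcal{C}_{2,j}$ by a computable first-order correction. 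Expanding $[\Dtwo^2,\mathcal{C}_{4,j}]$ and $[\Dtwo\Delta_x,\mathcal{C}_{4,j}]$ produces the desired main terms $8x_j\Dtwo^2$ and $8x_j\Dtwo\Delta_x$, together with anomalous contributions coming from the commutators of $\Dtwo$ with $\|x\|^2$, $\udx$, and $\langle x,D_u\rangle$. The coefficient $c$ is precisely the value that forces these anomalies to cancel, leaving exactly $8x_j\Df$.

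Once this identity is in hand, the argument closes as in Theorem \ref{theoremeven}: the operators $\mathcal{C}_{4,j}$, together with the rotations $L^x_{i,j}+L^u_{i,j}$, the translations $\pxj$, and the shifted Euler operator $\Ex + (m-4)/2$ (which supplies the scale invariance $[\Df,\Ex] = 4\Df$), span a first-order generalized symmetry algebra isomorphic to $\mathfrak{so}(1,m+1)$. Equivalently one reads off $\mathcal{J}_4\Df\mathcal{J}_4 = \|x\|^8\Df$, exhibiting invariance of $\Df$ under inversion and hence, via the Iwasawa decomposition, under the full M\"obius group. Uniqueness up to a multiplicative constant is then immediate from the existence half of Slov\'ak's classification of conformally invariant differential operators in higher spin theory.

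The main obstacle will be the bookkeeping in the cancellation step. Unlike $\Dt$, whose six constituent terms each contributed a technical lemma, $\Df$ has only two summands; however $\Dtwo$ itself is a sum of three non-commuting pieces, so $\Dtwo^2$ and $\Dtwo\Delta_x$ each unfold into many monomials whose commutators with $\mathcal{C}_{4,j}$ must be assembled carefully. The rational coefficient $c$ must emerge exactly, so tracking the polynomial dependence on $m+2k$ is essential. A useful sanity check is that the two factors $m+2k-2$ and $m+2k-4$ in the denominator of $c$ are precisely the dimensional points at which the relevant intermediate $Spin(m)$-representations degenerate, which is what forces their appearance and pins down the value of $c$ without guesswork.
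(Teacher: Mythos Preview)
Your approach is essentially identical to the paper's: define $\mathcal{J}_4$ with conformal weight $\|x\|^{4-m}$, set $\mathcal{C}_4=\mathcal{J}_4\partial_{x_j}\mathcal{J}_4$, and establish the generalized-symmetry commutator by computing $[\Dtwo^2,\mathcal{C}_4]$ and $[\Dtwo\Delta_x,\mathcal{C}_4]$ separately (the paper records these as two lemmas) and checking that the anomalous terms cancel precisely for $c=\frac{8}{(m+2k-2)(m+2k-4)}$. One small correction: the explicit form of $\mathcal{C}_4$ is not literally Lemma~\ref{SCT} with $m-2\mapsto m-4$---besides the absence of the $xe_j$ term, the remaining terms carry the opposite overall sign---and accordingly the paper obtains $[\Df,\mathcal{C}_4]=-8x_j\Df$ rather than $+8x_j\Df$; this is a harmless sign that will sort itself out once you actually compute $\mathcal{J}_4\partial_{x_j}\mathcal{J}_4$.
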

Hereafter we may suppress the $k$ index for the operator since there is little risk of confusion. The strategy is similar to that used above. It is sufficient to show only invariance under inversion. We have the definition for harmonic inversion as follows.
\begin{definition}
Harmonic inversion is a (conformal) transformation defined as
\begin{eqnarray*}
\mathcal{J}_{4}:C^{\infty}(\mathbb{R}^m,\mathcal{H}_k)\longrightarrow C^{\infty}(\mathbb{R}^m,\mathcal{H}_k):f(x,u)\mapsto \mathcal{J}_{4}[f](x,u):=||x||^{4-m}f(\frac{x}{||x||^2},\frac{xux}{||x||^2}).
\end{eqnarray*}
\end{definition}
Note this inversion consists of the classical Kelvin inversion $\mathcal{J}$ on $\mathbb{R}^m$ in the variable $x$ composed with a reflection $u\mapsto \omega u\omega$ acting on the dummy variable $u$ (where $x=||x||\omega$) and a multiplication by a conformal weight term $||x||^{4-m}$. It satisfies $\mathcal{J}_{4}^2=1$. Then a similar calculation as in \emph{Proposition A.1} in \cite{B1} provides the following lemma.
\begin{lemma}
The special conformal transformation is defined as
\begin{eqnarray*}
\mathcal{C}_{4}:=\mathcal{J}_{4}\partial_{x_j}\mathcal{J}_{4}=2\langle u,x\rangle\partial_{u_j}-2u_j\langle x,D_u\rangle +||x||^2\partial_{x_j}-x_j(2\mathbb{E}_x+m-4).
\end{eqnarray*}
\end{lemma}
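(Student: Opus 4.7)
The plan is to carry out the chain-rule computation directly, modeled on \emph{Proposition A.1} of \cite{B1}. Write $\mathcal{J}_{4}[f](x,u)=\|x\|^{4-m}f(y,w)$ with $y=x/\|x\|^{2}$ and $w=xux/\|x\|^{2}=\omega u\omega$ where $\omega=x/\|x\|$. Because $w$ is the reflection of $u$ across the hyperplane perpendicular to $\omega$, I would replace the Clifford product by the component formula $w_{i}=u_{i}-2\langle u,x\rangle x_{i}/\|x\|^{2}$, which reduces everything to ordinary scalar differentiation.

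First I would split $\pxj\mathcal{J}_{4}[f]$ into three pieces according to which $x$-dependent factor is hit by $\pxj$: (a) the conformal weight $\|x\|^{4-m}$; (b) the Kelvin variable $y$, using $\pxj y_{i}=\delta_{ij}/\|x\|^{2}-2x_{i}x_{j}/\|x\|^{4}$; and (c) the reflected variable $w$, using
\begin{equation*}
\pxj w_{i}=-\frac{2u_{j}x_{i}}{\|x\|^{2}}-\frac{2\langle u,x\rangle\delta_{ij}}{\|x\|^{2}}+\frac{4\langle u,x\rangle x_{i}x_{j}}{\|x\|^{4}}.
\end{equation*}
Pieces (a) and (b) are scalar and reproduce the standard formula for conjugation of $\pxj$ by a weighted Kelvin inversion: after composing back with $\mathcal{J}_{4}$ they yield exactly $\|x\|^{2}\pxj-x_{j}(2\Ex+m-4)$. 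The shift $m-4$ reads off directly from the conformal weight $4-m$, and the contracted sum $\sum_{i}x_{i}\partial_{y_{i}}$ turns into the Euler operator $\Ex$ once the second inversion collapses $y\mapsto x$.

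For piece (c), I would substitute the formula for $\pxj w_{i}$ into the chain sum $\sum_{i}(\partial_{w_{i}}f)\,\pxj w_{i}$ and then conjugate the result by $\mathcal{J}_{4}$. The key observations are that the reflection $u\mapsto\omega u\omega$ is an involution, so $w\mapsto u$ and $\partial_{w_{i}}\mapsto\puj$ under the second application of $\mathcal{J}_{4}$; that the bilinear form transforms as $\langle\omega u\omega,\,x/\|x\|^{2}\rangle=-\langle u,x\rangle/\|x\|^{2}$; and that $u_{j}\mapsto u_{j}-2\langle u,x\rangle x_{j}/\|x\|^{2}$. A short collection of terms shows that the contributions proportional to $\langle u,x\rangle x_{j}/\|x\|^{2}\cdot\langle x,D_{u}\rangle$ coming from the first and third summands of $\pxj w_{i}$ cancel, leaving exactly $2\langle u,x\rangle\puj-2u_{j}\langle x,D_{u}\rangle$. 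Adding the three contributions produces the claimed formula.

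The main obstacle is simply the bookkeeping in piece (c): one must carry the Clifford-multiplicative dependence of $w$ on $x$ through the chain rule and then back through a second application of the reflection, while tracking powers of $\|x\|$, sign flips of $\langle u,x\rangle$, and the substitution $\omega(\omega u\omega)\omega=u$ correctly. Once the explicit vector formula for $w_{i}$ reduces everything to scalar identities, the cancellation above is transparent, and the argument parallels \emph{Proposition A.1} of \cite{B1}.
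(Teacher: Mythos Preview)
Your proposal is correct and follows precisely the route the paper indicates: the paper's only proof is the remark that ``a similar calculation as in \emph{Proposition A.1} in \cite{B1}'' yields the lemma, and you have sketched exactly that chain-rule computation with the appropriate conformal weight $\|x\|^{4-m}$. Aside from a harmless typo ($\partial_{w_i}\mapsto\puj$ should read $\partial_{w_i}\mapsto\partial_{u_i}$), the split into pieces (a)--(c), the component formula for $w_i$, and the cancellation you identify between the first and third summands of $\pxj w_i$ are all right and reproduce the stated expression.
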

\begin{proposition}
The special conformal transformations $\mathcal{C}_{4}$, with $j\in\{1,2,\dots,m\}$ are generalized symmetries of $\mathcal{D}_{4}$. More specifically,
\begin{eqnarray*}
[\mathcal{D}_4,\mathcal{C}_{4}]=-8x_j\mathcal{D}_{4}.
\end{eqnarray*}
In particular, this shows $\mathcal{J}_{4}\mathcal{D}_{4}\mathcal{J}_{4}=||x||^{8}\mathcal{D}_{4}$, which generalizes the case of the classical higher order Dirac operator $D_x^{4}$. This also implies $\mathcal{D}_{4}$ is invariant under inversion and hence conformally invariant.
\end{proposition}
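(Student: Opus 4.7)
The plan is to adapt the strategy of Proposition \ref{propthree} to the fourth-order setting, exploiting the factorization $\Df = \Dtwo^2 - c\,\Dtwo \Delta_x$ with $c = 8/((m+2k-2)(m+2k-4))$. I would first establish the commutator identity $[\Df, \mathcal{C}_4] = -8 x_j \Df$ and then pass from it to the inversion identity $\mathcal{J}_4 \Df \mathcal{J}_4 = ||x||^8 \Df$ by exactly the argument already used in the cubic case.

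For the commutator, I would apply the Leibniz rule
\begin{align*}
[\Dtwo^2, \mathcal{C}_4] &= \Dtwo [\Dtwo, \mathcal{C}_4] + [\Dtwo, \mathcal{C}_4] \Dtwo, \\
[\Dtwo \Delta_x, \mathcal{C}_4] &= \Dtwo [\Delta_x, \mathcal{C}_4] + [\Dtwo, \mathcal{C}_4] \Delta_x,
\end{align*}
which reduces the problem to computing $[\Dtwo, \mathcal{C}_4]$ and $[\Delta_x, \mathcal{C}_4]$. The key input is the known conformal invariance of the higher spin Laplace operator from \cite{B1}: the second-order special conformal transformation $\mathcal{C}_2$, whose formula differs from that of $\mathcal{C}_4$ only in the Euler shift ($m-2$ rather than $m-4$), satisfies $[\Dtwo, \mathcal{C}_2] = -4 x_j \Dtwo$. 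Direct inspection of the formulas gives $\mathcal{C}_4 = \mathcal{C}_2 + 2x_j$, whence $[\Dtwo, \mathcal{C}_4] = -4x_j \Dtwo + 2[\Dtwo, x_j]$, and the remaining bracket $[\Dtwo, x_j]$ is a first-order expression that can be read off the explicit form of $\Dtwo$. Meanwhile $[\Delta_x, \mathcal{C}_4]$ involves only the $x$-dependent part of $\mathcal{C}_4$ (since $\Delta_x$ commutes with the pure $u$-terms) and reduces to combinations of the classical commutators $[\Delta_x, x_j] = 2\pxj$, $[\Delta_x, ||x||^2] = 4\Ex + 2m$, and $[\Delta_x, \Ex] = 2\Delta_x$.

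Assembling the pieces, I expect the precise coefficient $c$ appearing in $\Df$ to be exactly what is needed so that the lower-order crossterms produced by $\Dtwo[\Delta_x, \mathcal{C}_4]$ and $[\Dtwo, \mathcal{C}_4]\Delta_x$ cancel against their counterparts from $[\Dtwo^2, \mathcal{C}_4]$, leaving only $-8 x_j \Df$. The main obstacle is the bookkeeping in these cancellations: one must verify that every spurious second- and third-order remainder vanishes, which amounts to the nontrivial algebraic identity that pins down $c$. Once the commutator relation is secured, the passage to $\mathcal{J}_4 \Df \mathcal{J}_4 = ||x||^8 \Df$ proceeds verbatim as at the end of the proof of Proposition \ref{propthree} and in \cite{Ding1}: together with the manifest symmetries under rotations, translations, and dilations, the SCT symmetry completes a copy of the conformal algebra $\mathfrak{so}(1,m+1)$, and the infinitesimal identity integrates to the claimed inversion identity with conformal weight $||x||^8$, establishing conformal invariance of $\Df$.
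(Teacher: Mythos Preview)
Your proposal is correct and follows essentially the same route as the paper: compute $[\Dtwo^2,\mathcal{C}_4]$ and $[\Dtwo\Delta_x,\mathcal{C}_4]$ via the Leibniz rule $[AB,C]=A[B,C]+[A,C]B$, combine them with the coefficient $c=8/((m+2k-2)(m+2k-4))$ to obtain $[\Df,\mathcal{C}_4]=-8x_j\Df$, and then pass to the inversion identity exactly as in the cubic case. The only organizational difference is that the paper records the full expansions of $[\Dtwo^2,\mathcal{C}_4]$ and $[\Dtwo\Delta_x,\mathcal{C}_4]$ directly as two standalone lemmas, whereas you reduce one level further to $[\Dtwo,\mathcal{C}_4]$ and $[\Delta_x,\mathcal{C}_4]$ and import the relation $[\Dtwo,\mathcal{C}_2]=-4x_j\Dtwo$ from \cite{B1} together with $\mathcal{C}_4=\mathcal{C}_2+2x_j$; this is a mild shortcut but not a different argument.
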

This proposition follows immediately with the help of the following two lemmas.
\begin{lemma}
\begin{eqnarray*}
&&\big[\Dtwo^2,\mathcal{C}_4\big]=-8x_j\Dtwo^2+\frac{32\langle u,D_x\rangle\Delta_x\partial_{u_j}}{(m+2k-2)^2}-\frac{32u_j\langle D_u,D_x\rangle\Delta_x}{(m+2k-2)^2}-\frac{128\langle u,D_x\rangle^2\langle D_u,D_x\rangle\partial_{u_j}}{(m+2k-2)^2(m+2k-4)}\\
&+&\frac{128||u||^2\langle D_u,D_x\rangle\Delta_x\partial_{u_j}}{(m+2k-2)^2(m+2k-4)^2}-\frac{128||u||^2\langle D_u,D_x\rangle^2\partial_{x_j}}{(m+2k-2)^2(m+2k-4)^2}+\frac{128u_j\langle u,D_x\rangle\langle D_u,D_x\rangle^2}{(m+2k-2)^2(m+2k-4)}\\
&+&\frac{128||u||^2\langle u,D_x\rangle\langle D_u,D_x\rangle^2\partial_{u_j}}{(m+2k-2)^2(m+2k-4)^2}-\frac{128u_j||u||^2\langle D_u,D_x\rangle^3}{(m+2k-2)^2(m+2k-4)^2}.
\end{eqnarray*}
\end{lemma}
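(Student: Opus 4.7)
The plan is to avoid directly expanding the nine monomials of $\Dtwo^2$ and instead apply the Leibniz identity $[\Dtwo^2,\mathcal{C}_4] = \Dtwo\,[\Dtwo,\mathcal{C}_4] + [\Dtwo,\mathcal{C}_4]\,\Dtwo$, exploiting the already-established conformal invariance of the higher spin Laplace operator $\Dtwo$ from De Bie et al. To evaluate $[\Dtwo,\mathcal{C}_4]$, compare $\mathcal{C}_4$ with the special conformal transformation $\mathcal{C}_2 = \mathcal{J}_2\,\pxj\,\mathcal{J}_2$ belonging to $\Dtwo$ itself. The higher spin Laplace inversion $\mathcal{J}_2$ differs from $\mathcal{J}_4$ only in its conformal weight factor ($\|x\|^{2-m}$ in place of $\|x\|^{4-m}$), so a direct comparison of the two explicit formulas yields $\mathcal{C}_4 = \mathcal{C}_2 + 2x_j$. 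Since $[\Dtwo,\mathcal{C}_2] = -4x_j\Dtwo$ is the infinitesimal form of the known conformal invariance $\mathcal{J}_2\Dtwo\mathcal{J}_2 = \|x\|^4\Dtwo$, one obtains $[\Dtwo,\mathcal{C}_4] = -4x_j\Dtwo + 2[\Dtwo,x_j]$, where the auxiliary first-order commutator $[\Dtwo,x_j]$ is computed term-by-term from the formula for $\Dtwo$ using the elementary relations $[\Delta_x,x_j]=2\pxj$, $[\udx,x_j]=u_j$, and $[\dudx,x_j]=\puj$.

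Substituting this back into the Leibniz expansion, and sliding $x_j$ past the outer $\Dtwo$ via $\Dtwo x_j = x_j\Dtwo + [\Dtwo,x_j]$, the $x_j\Dtwo$-type contributions collapse to exactly $-8x_j\Dtwo^2$ (the leading term of the lemma), and the remainder assembles into the iterated commutator
\begin{equation*}
[\Dtwo^2,\mathcal{C}_4] \;=\; -8x_j\Dtwo^2 + 2\,\big[\Dtwo,\,[\Dtwo,x_j]\big].
\end{equation*}
It then remains to expand $[\Dtwo,[\Dtwo,x_j]]$ and match the eight remaining monomials on the right-hand side. The piece $2\pxj$ of $[\Dtwo,x_j]$ commutes with $\Dtwo$ and drops out, while the other three pieces are commuted with each of the three monomials of $\Dtwo$, producing the $\Delta_x$-terms via $[\udx,\dudx]=\Delta_x$ and the higher-order $\dudx$-terms via the $\mathfrak{sl}(2)$-type relations $[\|u\|^2,\dudx]=-2\udx$ and $[\|u\|^2,\udx]=0$.

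The main obstacle is the bookkeeping in this last expansion: roughly a dozen contributions arise, carrying denominators $(m+2k-2)^p(m+2k-4)^q$ for various $p,q\in\{1,2\}$, and partial cancellations must occur between contributions coming from the cross term $\udx\dudx$ of $\Dtwo$ and from its $\|u\|^2\dudx^2$ piece so that the output collapses to precisely the eight summands listed. Tracking signs and the exact numerical prefactors (the $32$ and $128$ in the stated formula) is the tedious part; a useful internal consistency check is that the $x_j$-coefficient must come out to $-8$, as forced by $\Dtwo^2$ having conformal weight $8$ under $\mathcal{J}_4$, so any discrepancy would reveal a sign or coefficient slip in the double commutator.
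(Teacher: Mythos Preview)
Your approach is correct and is, in fact, more structured than what the paper does. The paper offers no detailed argument for this lemma; it simply invokes the Leibniz identity $[AB,C]=A[B,C]+[A,C]B$ and declares the result a ``straightforward calculation,'' implicitly expanding $\Dtwo^2$ against all four terms of $\mathcal{C}_4$ directly. Your key additional observation---that $\mathcal{C}_4=\mathcal{C}_2+2x_j$, so that the already-proved relation $[\Dtwo,\mathcal{C}_2]=-4x_j\Dtwo$ from \cite{B1} can be recycled---cleanly isolates the leading term $-8x_j\Dtwo^2$ on conceptual grounds and reduces the residual computation to the double commutator $2[\Dtwo,[\Dtwo,x_j]]$. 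This buys you a substantially smaller bookkeeping problem (three pieces of $[\Dtwo,x_j]$ against three monomials of $\Dtwo$, with the $2\partial_{x_j}$ piece dropping out for free) and a built-in consistency check on the conformal weight, whereas the paper's brute-force route gives no such structural insight but requires no prior input from \cite{B1}. Both arrive at the same eight residual monomials; your derivation of $[\Dtwo^2,\mathcal{C}_4]=-8x_j\Dtwo^2+2[\Dtwo,[\Dtwo,x_j]]$ is correct as written.
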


\begin{lemma}
\begin{eqnarray*}
&&\big[\Dtwo\Delta_x,\mathcal{C}_4\big]=-8x_j\Dtwo\Delta_x+\frac{4m+8k-16}{m+2k-2}\langle u,D_x\rangle\Delta_x\partial_{u_j}-\frac{16\langle u,D_x\rangle^2\langle D_u,D_x\rangle\partial_{u_j}}{m+2k-2}\\
&+&\frac{16||u||^2\langle D_u,D_x\rangle\Delta_x\partial_{u_j}}{(m+2k-2)(m+2k-4)}+\frac{16||u||^2\langle u,D_x\rangle\langle D_u,D_x\rangle^2\partial_{u_j}}{(m+2k-2)(m+2k-4)}-\frac{4m+8k-16}{m+2k-2}u_j\langle D_u,D_x\rangle\Delta_x\\
&+&\frac{16u_j\langle u,D_x\rangle\langle D_u,D_x\rangle^2}{m+2k-2}-\frac{16||u||^2\langle D_u,D_x\rangle^2\partial_{x_j}}{(m+2k-2)(m+2k-4)}-\frac{16u_j||u||^2\langle D_u,D_x\rangle^3}{(m+2k-2)(m+2k-4)}.
\end{eqnarray*}
\end{lemma}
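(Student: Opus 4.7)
The plan is to split the commutator via the Leibniz rule,
\[
[\Dtwo\Delta_x,\mathcal{C}_4]=\Dtwo[\Delta_x,\mathcal{C}_4]+[\Dtwo,\mathcal{C}_4]\Delta_x,
\]
which reduces the task to two simpler commutators. For the first, $\Delta_x$ only sees the $x$-dependence in $\mathcal{C}_4=2\langle u,x\rangle\partial_{u_j}-2u_j\langle x,D_u\rangle+\|x\|^2\partial_{x_j}-x_j(2\mathbb{E}_x+m-4)$, so the classical identities $[\Delta_x,\langle u,x\rangle]=2\langle u,D_x\rangle$, $[\Delta_x,\|x\|^2]=4\mathbb{E}_x+2m$, $[\Delta_x,x_j]=2\partial_{x_j}$, $[\Delta_x,\mathbb{E}_x]=2\Delta_x$, together with $[\mathbb{E}_x,\partial_{x_j}]=-\partial_{x_j}$, collapse $[\Delta_x,\mathcal{C}_4]$ into a short combination of $\langle u,D_x\rangle\partial_{u_j}$, $\partial_{x_j}$ and $x_j\Delta_x$.

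For $[\Dtwo,\mathcal{C}_4]$ I would expand
\[
\Dtwo=\Delta_x-\frac{4\langle u,D_x\rangle\langle D_u,D_x\rangle}{m+2k-2}+\frac{4\|u\|^2\langle D_u,D_x\rangle^2}{(m+2k-2)(m+2k-4)}
\]
and compute the commutator of each summand against each of the four pieces of $\mathcal{C}_4$, using the standard transposition rules $[\langle u,D_x\rangle,x_j]=u_j$, $[\langle D_u,D_x\rangle,x_j]=\partial_{u_j}$, $[\langle u,D_x\rangle,\partial_{u_j}]=-\partial_{x_j}$, $[\|u\|^2,\partial_{u_j}]=-2u_j$ and $[\mathbb{E}_x,\langle u,D_x\rangle]=\langle u,D_x\rangle$. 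This is exactly the computation already carried out in \cite{B1,E} for the higher spin Laplace operator, so one expects the leading term $-4x_j\Dtwo$ plus boundary contributions of the schematic shape $\langle u,D_x\rangle\partial_{u_j}$, $u_j\langle D_u,D_x\rangle$ and $\|u\|^2\langle D_u,D_x\rangle\partial_{u_j}$.

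Combining the two pieces, I would multiply $[\Dtwo,\mathcal{C}_4]$ on the right by $\Delta_x$, push $\Dtwo$ past the low-order factors coming from $[\Delta_x,\mathcal{C}_4]$ on the left, and reduce every monomial to a fixed normal form (all $u_j$ and $\partial_{u_j}$ moved to the rightmost position, all $\|u\|^2$ factors to the leftmost) so as to be matched against the eight terms on the right-hand side. The main obstacle is not any individual commutator but the bookkeeping: after full expansion there are on the order of thirty intermediate monomials, and the delicate cancellations depend on tracking the two denominators $(m+2k-2)$ and $(m+2k-4)$ as $\Dtwo$ moves across lower-order operators. I would organize the verification by grouping terms according to target monomial (all contributions to $\|u\|^2\langle D_u,D_x\rangle^2\partial_{x_j}$, then all contributions to $u_j\|u\|^2\langle D_u,D_x\rangle^3$, and so on) and match coefficients one group at a time. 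A strong consistency check at the end is that, combined with the preceding lemma through $\mathcal{D}_4=\Dtwo^2-\tfrac{8}{(m+2k-2)(m+2k-4)}\Dtwo\Delta_x$, all non-principal terms must cancel to produce the clean identity $[\mathcal{D}_4,\mathcal{C}_4]=-8x_j\mathcal{D}_4$ asserted in the proposition.
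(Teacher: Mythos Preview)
Your proposal is correct and follows essentially the same approach as the paper, which proves the lemma by invoking the Leibniz rule $[AB,C]=A[B,C]+[A,C]B$ (with $A=\Dtwo$, $B=\Delta_x$, $C=\mathcal{C}_4$) and then appealing to a straightforward calculation. Your write-up is in fact considerably more detailed than the paper's own proof, which does not spell out the intermediate commutators or the bookkeeping; the only minor caveat is that the computation of $[\Dtwo,\mathcal{C}_4]$ is not literally the one in \cite{B1,E} since $\mathcal{C}_4$ differs from the second-order special conformal transformation $\mathcal{C}_2$ by the shift $2x_j$, but this only adds the easy extra commutator $2[\Dtwo,x_j]$ to the known identity $[\Dtwo,\mathcal{C}_2]=-4x_j\Dtwo$.
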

\begin{proof}
With the help of $[AB,C]=A[B,C]+[A,C]B$, where $A,\ B$ and $C$ are operators, a straightforward calculation leads to the result that
 \begin{eqnarray*}
[\mathcal{D}_4,\mathcal{C}_{4}]=-8x_j\mathcal{D}_{4}.
\end{eqnarray*}
Similar arguments as in the 3rd order case complete the proof.
\end{proof}

\section{Connection with lower order conformally invariant operators}
To construct higher order conformally invariant operators, one possible method is by composing and combining lower order conformally invariant operators. In this section, we will rewrite our operators $\Dt$ and $\Df$ in terms of first order and second order conformally invariant operators. We expect this will help us when we consider other higher order conformally invariant operators. \\
\par
Recall $\Dt$ maps $C^{\infty}(\Rm,\Mk)$ to $C^{\infty}(\Rm,\Mk)$. If we fix $x\in\Rm$, then for any $f(x,u)\in\Mk$, we have $\Dt f(x,u)\in\Mk$. In other words, $\Dt$ should be equal to the sum of contributions  to $\Mk$ of all terms in $\Dt$.
 Notice that if we apply each term of $\Dt$ to $f(x,u)\in C^{\infty}(\Rm,\Mk)$, we will get a $k$-homogeneous polynomial in $u$ that is in the kernel of $\Delta_u^2$. Hence, we can decompose it by harmonic decomposition as follows
\begin{eqnarray*}
\mathcal{P}_k=\Hk\oplus u^2\mathcal{H}_{k-2}.
\end{eqnarray*}
where $\mathcal{P}_k$ is the $k$-homogeneous polynomial space and $\Hk$ is the $k$-homogeneous harmonic polynomial space. The Almansi-Fischer decomposition provides further
\begin{eqnarray*}
\Hk=\Mk\oplus u\Mkk,
\end{eqnarray*}
where $\Mk$ is the $k$-homogeneous monogenic polynomial space; therefore, the contribution of each term to $\Mk$ can be written with two projections. For instance, the contribution of $u^3\langle D_u,D_x\rangle^3f(x,u)$ to $\Mk$ is $P_kP_1u^3\langle D_u,D_x\rangle^3f(x,u)$, where
\begin{eqnarray*}
\mathcal{P}_k\xrightarrow{P_1}\Hk\xrightarrow{P_k}\Mk,
\end{eqnarray*} 
and
\begin{eqnarray*}
P_1=1+\frac{u^2\Delta_u}{2(m+2k-4)},\ P_k=1+\frac{uD_u}{m+2k-2}.
\end{eqnarray*}

We also notice that for fixed $x\in\Rm$ and $f(x,u)\in\Mk$,
$$u^3\langle D_u,D_x\rangle^3f(x,u),\ ||u||^2\langle D_u, D_x\rangle^2D_xf(x,u)\in u^2\mathcal{H}_{k-2},$$
and $u\langle D_u, D_x\rangle D_x^2\in u\Mkk$. Hence, their contributions to $\Mk$ are all zero.
Therefore,
\begin{eqnarray*}
\Dt=P_kP_1\bigg(D_x^3+\displaystyle\frac{4}{m+2k}\langle u, D_x\rangle\langle D_u, D_x\rangle D_x-\displaystyle\frac{8u\langle u, D_x\rangle\langle D_u,D_x\rangle^2}{(m+2k)(m+2k-2)}\bigg).
\end{eqnarray*}
It is useful to recall some first and second order conformally invariant operators in higher spin spaces \cite{B1,B}:
\begin{eqnarray*}
&&R_k:\ C^{\infty}(\Rm,\Mk)\longrightarrow C^{\infty}(\Rm,\Mk),\ 
R_k=P_kD_x=(1+\frac{uD_u}{m+2k-2})D_x;\\
&&T_k:\ C^{\infty}(\Rm,u\Mkk)\longrightarrow C^{\infty}(\Rm,\Mk),\ 
T_k=P_kD_x=(1+\frac{uD_u}{m+2k-2})D_x;\\
&&T_k^*:\ C^{\infty}(\Rm,\Mk)\longrightarrow C^{\infty}(\Rm,u\Mkk),\ 
T_k^*=(I-P_k)D_x=\frac{uD_u}{m+2k-2}D_x;\\
&&\Dtwo:\ C^{\infty}(\Rm,\Hk)\longrightarrow C^{\infty}(\Rm,\Hk),\ 
\Dtwo=P_1(\Delta_x-\frac{4}{m+2k-2}\langle u,D_x\rangle\langle D_u,D_x\rangle D_x).
\end{eqnarray*}
Hence,
\begin{eqnarray*}
\Dt=P_kP_1\bigg(D_x^3+\frac{4\langle u,D_x\rangle\langle D_u,D_x\rangle D_x}{m+2k-2}-\frac{8\langle u,D_x\rangle\langle D_u,D_x\rangle D_x}{(m+2k)(m+2k-2)}
-\frac{8u\langle u,D_x\rangle\langle D_u,D_x\rangle^2}{(m+2k)(m+2k-2)}\bigg)\\
=-P_kP_1\Dtwo D_x-\frac{8}{(m+2k)(m+2k-2)}P_kP_1\big(\langle u,D_x\rangle\langle D_u,D_x\rangle D_x+u\langle u,D_x\rangle\langle D_u,D_x\rangle^2\big).
\end{eqnarray*}

Since for $f(x,u)\in C^{\infty}(\Rm,\Mk)$, we have \cite{B1}:
$$\Dtwo=-R_k^2+\frac{4u\langle D_u,D_x\rangle}{(m+2k-2)(m+2k-4)}R_k.$$
A straightforward calculation leads to
$$\Dt=R_k^3+\displaystyle\frac{4}{(m+2k)(m+2k-4)}T_kT_k^*R_k.$$
\par
Recall these conformally invariant second order twistor and dual-twistor operators \cite{B1}:
\begin{eqnarray*}
	&&T_{k,2}=\langle u,D_x\rangle -\frac{||u||^2\langle D_u,D_x\rangle}{m+2k-4}:\ C^{\infty}(\Rm,\mathcal{H}_{k-1})\longrightarrow C^{\infty}(\Rm,\Hk),\\
	&&T_{k,2}^*=\langle D_u,D_x\rangle:\ C^{\infty}(\Rm,\Hk)\longrightarrow C^{\infty}(\Rm,\mathcal{H}_{k-1}), \text{ and}
\\&&\Dtwo=\Delta_x-\frac{4T_{k,2}T_{k,2}^*}{m+2k-2}.
\end{eqnarray*}
Hence
\begin{eqnarray*}
\Df&=&\Dtwo^2-\frac{8\Dtwo\Delta_x}{(m+2k-2)(m+2k-4)}\\
   &=&\Dtwo^2-\frac{8\Dtwo}{(m+2k-2)(m+2k-4)}\big(\Dtwo+\frac{4T_{k,2}T_{k,2}^*}{m+2k-2}\big)\\
   &=&\frac{(m+2k)(m+2k-6)}{(m+2k-2)(m+2k-4)}\Dtwo^2-\frac{32\Dtwo T_{k,2}T_{k,2}^*}{(m+2k-2)^2(m+2k-4)}.
\end{eqnarray*}
\section{Fundamental solutions and Intertwining operators}
Using similar arguments as in \cite{Ding1}, we obtain the fundamental solutions (up to a multiplicative constant) and intertwining operators of $\Dt$ and $\Df$ as follows.
\begin{theorem}\textbf{(Fundamental solutions of $\Dt$)}\\
Let $Z_{k}(u,v)$ be the reproducing kernel of $\Mk$, then the fundamental solutions of $\Dt$ are
$$c_1\displaystyle\frac{x}{||x||^{m-2}}Z_k(\displaystyle\frac{xux}{||x||^2},v),$$
where $c_1$ is a constant.
\end{theorem}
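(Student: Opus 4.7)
The candidate function $E(x,u,v) := \frac{x}{\|x\|^{m-2}} Z_k\bigl(\frac{xux}{\|x\|^2},\, v\bigr)$ is precisely the monogenic inversion $\mathcal{J}_{3}[Z_k(\cdot,v)](x,u)$ of the reproducing kernel, viewed as an $x$-independent function of $u$. The plan is to exploit this identification together with the conformal-invariance identity $\mathcal{J}_{3}\Dt\mathcal{J}_{3}=\|x\|^{6}\Dt$ of Proposition \ref{propthree}, mirroring the corresponding argument carried out for the operators $\mathcal{D}_k$ in \cite{Ding1}.

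The first step is to verify $\Dt E = 0$ on $\Rm\setminus\{0\}$. Every term of $\Dt$ contains at least one $D_x$ factor, so $\Dt Z_k(u,v)\equiv 0$ identically in $x$. Applying $\mathcal{J}_{3}\Dt\mathcal{J}_{3}=\|x\|^{6}\Dt$ to $Z_k(u,v)$ yields $\mathcal{J}_{3}(\Dt E)=0$, and inverting $\mathcal{J}_{3}$ (an isomorphism of $C^{\infty}(\Rm\setminus\{0\},\Mk)$ satisfying $\mathcal{J}_{3}^{2}=-\mathrm{Id}$) gives $\Dt E=0$ away from the origin.

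The second and harder step is to show that the distribution $\Dt E$ equals a nonzero scalar multiple of $\delta(x)\,Z_k(u,v)$. Since $E$ has radial singularity of order $\|x\|^{-(m-3)}$ it is locally integrable, so $\Dt E$ is a well-defined distribution; by the preceding step it is supported at the origin. To identify its form, I would pair $\Dt E$ against a test function $\phi(x,u)\in C_c^{\infty}(\Rm,\Mk)$, transfer $\Dt$ onto $\phi$ via integration by parts on $\Rm\setminus\overline{B_\varepsilon(0)}$, and analyze the boundary integrals on $\partial B_\varepsilon$ as $\varepsilon\to 0^{+}$. The reproducing property of $Z_k(u,v)$ against $k$-homogeneous monogenic polynomials then extracts $\phi(0,v)$ from the limit. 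A cleaner route is representation-theoretic: $\Spin(m)$-equivariance (inherited from $\mathcal{J}_{3}$ and $\Dt$) forces $\Dt E$ to be the unique, up to scalar, equivariant $\Mk\otimes\Mk$-valued distribution supported at $\{0\}$, namely a multiple of $\delta(x)\,Z_k(u,v)$.

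The principal obstacle will be verifying that this scalar is nonzero. The cleanest route is reduction to the scalar case $k=0$: since $D_u$ annihilates $u$-constants, every coefficient of $\Dt$ involving $D_u$ drops out, $\Dt$ collapses to $D_x^{3}$, and $E$ collapses to a multiple of $x/\|x\|^{m-2}$, which is the classical fundamental solution of $D_x^{3}$ recorded in \cite{P}. The nonvanishing in this base case propagates to general $k$ via the $\Spin(m)$-equivariance and the Almansi--Fischer structure, and the resulting nonzero constant is absorbed into $c_1$ to finish the proof.
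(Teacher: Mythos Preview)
Your approach is essentially the one the paper intends: it gives no detailed proof here but defers to the argument in \cite{Ding1}, and your proposal explicitly mirrors that---identify the candidate as $\mathcal{J}_{3}[Z_k(\cdot,v)]$, apply the inversion identity $\mathcal{J}_{3}\Dt\mathcal{J}_{3}=\|x\|^{6}\Dt$ (equivalently the intertwining relation written out after Proposition~\ref{propthree}) to kill $\Dt E$ off the origin, and then analyse the distribution supported at $0$.

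There is, however, a genuine gap in your final step. Reducing to $k=0$ shows that $D_x^{3}$ acting on $x/\|x\|^{m-2}$ produces a nonzero multiple of $\delta$, but this says nothing about the constant arising for $k\ge 1$: the operators $\Dt$ and the target spaces $\Mk$ change with $k$, and $Spin(m)$-equivariance only pins down the \emph{form} $c\,\delta(x)Z_k(u,v)$, not the value of $c$. There is no morphism from the $k=0$ problem to the $k\ge 1$ problem through which the nonvanishing would ``propagate,'' and the Almansi--Fischer decomposition relates $\Hk$ to $\Mk\oplus u\Mkk$, not $\Mk$ to $\mathcal{M}_0$. To close the argument you must actually carry out the boundary-integral computation you sketched (pass $\Dt$ across by parts on $\Rm\setminus\overline{B_\varepsilon}$ and track the $\varepsilon\to 0$ limit), using the explicit form of $\Dt$ and the reproducing property of $Z_k$; this is the substance of the ``similar arguments'' in \cite{Ding1}, and it is where the constant is seen to be nonzero for every $k$.
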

\begin{theorem}\textbf{(Fundamental solutions of $\Df$)}\\
Let $Z_{k}(u,v)$ be the reproducing kernel of $\Hk$, then the fundamental solutions of $\Df$ are
$$c_2||x||^{4-m}Z_k(\displaystyle\frac{xux}{||x||^2},v),$$
where $c_2$ is a constant.
\end{theorem}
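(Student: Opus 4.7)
The plan is to mirror the strategy used in \cite{Ding1} for the fundamental solution of $\Dt$: conjugate a trivially annihilated function by the conformal inversion $\mathcal{J}_4$ to land on a function with a singularity at the origin, then identify its image under $\Df$ as a distribution supported at $\{x=0\}$ using the biharmonic Green's function identity together with the reproducing property of $Z_k$.

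First I would set $G(x,u,v) := c_2 ||x||^{4-m} Z_k(xux/||x||^2, v)$ and observe that $G = c_2 \mathcal{J}_4[Z_k(u,v)]$, viewing $Z_k(u,v)$ as an $\Hk$-valued function of $(x,u)$ that happens to be independent of $x$ (with $v$ a parameter). Because every term in $\Df = \Dtwo^2 - \frac{8}{(m+2k-2)(m+2k-4)}\Dtwo\Delta_x$ contains at least one $x$-derivative (by the explicit form of $\Dtwo$), one has $\Df Z_k(u,v) \equiv 0$. Combining this with the conformal invariance $\mathcal{J}_4 \Df \mathcal{J}_4 = ||x||^8 \Df$ of the preceding proposition and with $\mathcal{J}_4^2 = 1$ yields $\Df G = 0$ pointwise for $x \neq 0$. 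Hence $\Df G$ is a distribution supported at the origin.

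To identify this distribution, I would test against a compactly supported $\varphi(x,u)$ valued in $\Hk$ and integrate by parts. Two ingredients drive the calculation: the classical biharmonic identity $\Delta_x^2 ||x||^{4-m} = C_m\,\delta(x)$ on $\Rm$ (for $m > 4$), and the reproducing property $\langle Z_k(\cdot, v), h(\cdot)\rangle_u = h(v)$ of $Z_k$ on $\Hk$ under the Fischer inner product. The leading symbol of $\Df$ at the origin is $\Delta_x^2$, while the lower-order $u$-derivative contributions in $\Df$, when applied to $Z_k(xux/||x||^2, v)$, are folded into the reproducing property to yield evaluation at $v$. This produces $\langle \Df G, \varphi\rangle = \varphi(0, v)$ up to the normalization $c_2$, identifying $G$ as a fundamental solution and fixing $c_2$ from $C_m$ and the normalization of $Z_k$.

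The main obstacle is this identification step: managing the chain-rule expansion of $\Df$ acting on $Z_k(xux/||x||^2, v)$ near $x = 0$. Since $xux/||x||^2$ is homogeneous of degree zero in $x$ but singular at the origin, each $x$-derivative in $\Dtwo^2$ and $\Dtwo\Delta_x$ produces numerous singular cross terms; all of these cancel away from the origin (guaranteed by the first step) but must be accounted for at the origin. The cleanest route I see is to intertwine $\Df$ term-by-term through $\mathcal{J}_4$, using the commutation relations from the proof of the preceding proposition to reduce the distributional action to the classical biharmonic identity tensored with a reproducing-kernel evaluation in $u$, bypassing the direct expansion. An alternative is to expand $Z_k(\cdot, v)$ in an $\Hk$ polynomial basis and verify the fundamental solution property basis-element-by-basis-element; this is feasible but computationally heavier.
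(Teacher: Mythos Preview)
Your approach is essentially the same as the paper's: the paper offers no detailed argument here, merely pointing to ``similar arguments as in \cite{Ding1}'', and you have correctly reconstructed what those arguments are---apply the harmonic inversion $\mathcal{J}_4$ to the constant-in-$x$ reproducing kernel, use the intertwining identity $\mathcal{J}_4\Df\mathcal{J}_4=\|x\|^8\Df$ together with $\mathcal{J}_4^2=1$ to conclude $\Df G=0$ off the origin, and then identify the point-supported distribution via the biharmonic fundamental solution and the reproducing property of $Z_k$. Your honest flagging of the distributional identification as the technical crux is appropriate; the intertwining route you propose (pushing $\Df$ through $\mathcal{J}_4$ rather than expanding directly) is indeed the cleaner way and is precisely the spirit of the argument in \cite{Ding1}.
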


\begin{theorem}\textbf{(Intertwining operators)}\\
Let $y=\phi(x)=(ax+b)(cx+d)^{-1}$ be a M\"{o}bius transformation. Then
\begin{eqnarray*}
\frac{\widetilde{cx+d}}{||cx+d||^{m+4}}\mathcal{D}_{3,y,\omega}f(y,\omega)=\mathcal{D}_{3,x,u}\frac{\widetilde{cx+d}}{||cx+d||^{m-2}}f(\phi(x),\frac{(cx+d)u\widetilde{(cx+d)}}{||cx+d||^2}),
\end{eqnarray*}
where $\omega=\displaystyle\frac{(cx+d)u\widetilde{(cx+d)}}{||cx+d||^2}$ and $f(y,\omega)\in C^{\infty}(\Rm,\Mk)$;
\begin{eqnarray*}
||cx+d||^{-m-4}\mathcal{D}_{4,y,\omega}f(y,\omega)=\mathcal{D}_{4,x,u}||cx+d||^{4-m}f(\phi(x),\frac{(cx+d)u\widetilde{(cx+d)}}{||cx+d||^2}),
\end{eqnarray*}
where $\omega=\displaystyle\frac{(cx+d)u\widetilde{(cx+d)}}{||cx+d||^2}$ and $f(y,\omega)\in C^{\infty}(\Rm,\Hk)$.
\end{theorem}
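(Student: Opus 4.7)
The plan is to reduce the intertwining identity for a general M\"obius transformation $\phi(x)=(ax+b)(cx+d)^{-1}$ to its four elementary building blocks as supplied by the Iwasawa decomposition recalled in the preliminaries: translations, dilations, rotations, and inversion. Since intertwining identities of the claimed form compose---the weight $\widetilde{cx+d}/||cx+d||^{m-2}$ for $\Dt$ (resp.\ the scalar weight $||cx+d||^{4-m}$ for $\Df$) obeys a cocycle rule under composition of M\"obius maps via the Ahlfors-Vahlen relations on $a,b,c,d$---it suffices to verify the identity separately on each generator.

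For translations $\phi(x)=x+b$ one has $c=0$, $d=1$, the conformal weight reduces to $1$, and both sides agree because $\partial_{x_j}$ is a proper symmetry of $\Dt$ and $\Df$. For dilations the conformal weight is a scalar power of the scaling factor, and the relation follows from the commutation relations $\Dt\mathbb{E}_x=(\mathbb{E}_x+3)\Dt$ and $\Df\mathbb{E}_x=(\mathbb{E}_x+4)\Df$, equivalently the shifted Euler operator being a generalized symmetry. For rotations $\phi(x)=sx\tilde{s}$ with $s\in Spin(m)$, the twist $u\mapsto (cx+d)u\widetilde{(cx+d)}/||cx+d||^2$ specializes to $u\mapsto sus^{-1}$, and combining this with $\mathfrak{so}(m)$-invariance---that is, $L_{ij}^x+L_{ij}^u$ being proper symmetries---yields the claim. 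The decisive ingredient is invariance under inversion, which is exactly the content of Proposition \ref{propthree} for $\Dt$ and the analogous proposition for $\Df$: $\mathcal{J}_{3}\Dt\mathcal{J}_{3}=||x||^{6}\Dt$ and $\mathcal{J}_{4}\Df\mathcal{J}_{4}=||x||^{8}\Df$. Rewriting these after the substitution $y=x/||x||^2$, $\omega=xux/||x||^2$ produces the intertwining identity for the inversion M\"obius transformation, with precisely the claimed powers of $||cx+d||=||x||$ and Clifford weight $\widetilde{cx+d}=x$.

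The main obstacle will be verifying the cocycle property cleanly: that composing two intertwining identities of the asserted form yields another of the same form, and that the Clifford-valued weight $\widetilde{cx+d}$ behaves correctly under composition. This requires careful bookkeeping using the Ahlfors-Vahlen identities together with the anti-involution $\widetilde{(c_1c_2)}=\widetilde{c_2}\widetilde{c_1}$; in particular, the reflection on the internal variable $u$ must be tracked so that the final spin variable $\omega$ matches on both sides after each generator is applied. Once this cocycle check is in place, assembling the four generator identities along an Iwasawa decomposition of $\phi$ produces the formula for every M\"obius transformation and completes the proof for both $\Dt$ and $\Df$ simultaneously, the two arguments being strictly parallel apart from the differing conformal weights.
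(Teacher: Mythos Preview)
Your approach is correct and is precisely the strategy the paper sets up: Section~3 establishes the generalized symmetries (translations, the shifted Euler operator, angular momentum, and the inversion identities $\mathcal{J}_3\Dt\mathcal{J}_3=\|x\|^6\Dt$ and $\mathcal{J}_4\Df\mathcal{J}_4=\|x\|^8\Df$) exactly so that the intertwining relation can be assembled generator-by-generator via the Iwasawa decomposition, and the paper then simply cites the analogous argument in \cite{Ding1} rather than writing out the details. One minor point: the paper's decomposition lists \emph{reflections} rather than rotations (covering all of $O(m)$, consistent with $a\tilde{d}-b\tilde{c}=\pm 1$), so in your generator list you should either replace rotations by reflections $x\mapsto axa$, $u\mapsto aua$ for $a\in\mathbb{S}^{m-1}$, or note that a reflection factors as a rotation composed with a single fixed reflection whose intertwining identity is immediate.
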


It is worth pointing out that our above results generalize to conformally flat manifolds according to the method in our paper on cylinders and Hopf manifolds \cite{Ding}.

\end{document}